\documentclass[11pt]{amsart}
\usepackage{fullpage}
\usepackage[active]{srcltx}
\usepackage{amssymb,amsthm,amsmath,amscd}
\usepackage{amsmath,amssymb,amsthm,mathtools}
\usepackage{array,booktabs,tabularx,longtable}
\usepackage{microtype}
\usepackage{enumitem}
\usepackage[hidelinks]{hyperref}
\usepackage[nameinlink,noabbrev]{cleveref}
\usepackage{xcolor}
\usepackage{comment}

\usepackage{hyperref}

\usepackage{graphicx} 
\usepackage{amsfonts}

\usepackage{tikz-cd}
\usepackage[all,cmtip]{xy}

\newtheorem{theorem}{Theorem}
\newtheorem{lemma}[theorem]{Lemma}
\newtheorem{proposition}[theorem]{Proposition}
\newtheorem{conjecture}[theorem]{Conjecture}
\newtheorem{question}[theorem]{Question}
\newtheorem{corollary}[theorem]{Corollary}
\newtheorem{claim}[theorem]{Claim}

\theoremstyle{definition}
\newtheorem{remark}[theorem]{Remark}

\DeclareMathOperator{\m}{\mathfrak{m}}

\DeclareMathOperator{\q}{\mathfrak{q}}
\DeclareMathOperator{\sO}{\mathcal{O}}
\DeclareMathOperator{\fra}{\mathfrak{a}}
\DeclareMathOperator{\depth}{\mathrm{depth}}
\DeclareMathOperator{\Supp}{\mathrm{Supp}}

\DeclareMathOperator{\Ann}{\text{Ann}}
\DeclareMathOperator{\pd}{\text{pd}}
\DeclareMathOperator{\Ext}{\mathrm{Ext}}
\DeclareMathOperator{\Tor}{\mathrm{Tor}}
\DeclareMathOperator{\Kos}{\mathrm{Kos}}
\DeclareMathOperator{\Hom}{\mathrm{Hom}}
\DeclareMathOperator{\Image}{\text{Image}}
\DeclareMathOperator{\Spec}{\mathrm{Spec}}
\DeclareMathOperator{\Proj}{\mathrm{Proj}}
\DeclareMathOperator{\codim}{\mathrm{codim}}
\DeclareMathOperator{\grade}{\mathrm{grade}}
\DeclareMathOperator{\coker}{\mathrm{coker}}

\newcommand{\rank}{\operatorname{rank}}

\newcommand{\height}{\operatorname{ht}}

\renewcommand{\m}{\mathfrak{m}}

\title{Counterexamples to Peskine--Szpiro's conjecture on modules of finite projective dimension}
\author{Linquan Ma}
\address{Department of Mathematics, Purdue University, West Lafayette, IN 47907, USA}
\email{ma326@purdue.edu}

\begin{document}

\begin{abstract}
We give counterexamples to the remaining homological conjectures of Peskine--Szpiro from \cite{PeskineSzpiroIHES} concerning modules of finite projective dimension over Noetherian local rings. Specifically, we show that the dimension inequality, the strong intersection conjecture, and the grade conjecture are all false in general.
\end{abstract}

\maketitle

\tableofcontents

\newpage
\section{Introduction}

In their seminal work \cite{PeskineSzpiroIHES}, Peskine--Szpiro raised six interrelated conjectures concerning the dimension, the depth, and some other homological properties of modules of finite projective dimension over possibly singular Noetherian local rings. These conjectures are partially motivated by Serre's homological approach to local intersection theory \cite{SerreLocalAlgebra}, and they were among the earliest problems now grouped under the name ``homological conjectures''. The guiding idea was that a closed subset defined as the support of a module of finite projective dimension should retain, even in a singular ambient scheme, many of the intersection-theoretic features familiar from the nonsingular case. Accordingly, the six conjectures were presented as a package governing the topology of such supports. Hochster subsequently enlarged and organized this circle of questions in
\cite{HochsterTopicsCBMS} and studied them in depth, emphasizing in particular the role of big Cohen--Macaulay
modules as a common mechanism for attacking them.  Peskine--Szpiro's conjectures thus form one of the starting points of Hochster's broader program, and they have several applications to conjectures of Auslander and Bass, as well as many questions in local cohomology.

The homological conjectures have generated a tremendous amount of activity over the past fifty years. At the heart of Hochster's web are the direct summand conjecture and the big Cohen--Macaulay modules and algebras -- these were known in equal characteristic, with the mixed characteristic case being the main obstruction. About a decade ago, Andr\'{e} \cite{AndreDSC} proved the direct summand conjecture and showed the existence of big Cohen--Macaulay algebras. Together with the known equivalences and implications, these breakthroughs
settled the central part of Hochster's list of the homological conjectures.
On the other hand, they did not shed light on the following three conjectures of Peskine--Szpiro, which form a separate branch of the story and had remained
open even in equal characteristic.

\begin{conjecture}[Peskine--Szpiro]
\label{conj: Peskine--Szpiro}
Let $R$ be a Noetherian local ring and let $M$ be a finitely generated $R$-module of finite projective dimension. Suppose $N$ is a finitely generated $R$-module so that $M\otimes_R N$ has finite length. Then 
\begin{enumerate}
    \item[$(i)$] $\dim(M) + \dim(N) \leq \dim(R)$ -- \emph{(dimension inequality)};
    \item[$(ii)$]  $\dim(N) \leq \grade(M)$ -- \emph{(strong intersection
        conjecture)};
    \item[$(iii)$] $\grade(M) = \dim(R) - \dim(M)$ -- \emph{(grade conjecture)}.
\end{enumerate}
\end{conjecture}

Indeed, $(i), (ii)$, and $(iii)$ are \cite[II (a), (e), (f)]{PeskineSzpiroIHES} respectively. The other three conjectures include Auslander's zerodivisor conjecture, the rigidity of Tor, and the intersection conjecture \cite[II (b), (c), (d)]{PeskineSzpiroIHES}. These had been settled before Andr\'{e}'s work: the rigidity of Tor is false \cite{HeitmannCounterexampleRigidityofTor}, whereas the
zerodivisor and intersection conjectures are equivalent and are both true; see \cite{PeskineSzpiroIHES,HochsterTopicsCBMS,RobertsIntersectionTheorem}. It is not difficult to show (see \cite[II Th\'{e}or\`{e}me (0.10)]{PeskineSzpiroIHES}) that  
$$\text{strong intersection conjecture} \,\ \Longleftrightarrow \,\ \text{dimension inequality }  + \text{ grade conjecture}.$$

In this paper, we construct counterexamples to Conjecture~\ref{conj: Peskine--Szpiro}:

\begin{theorem}
\label{thm: main}
There exists a Noetherian complete local ring of dimension three for which the dimension inequality does not hold; and there exists a Noetherian complete local ring of dimension four for which the grade conjecture does not hold. Thus, all three statements in Conjecture~\ref{conj: Peskine--Szpiro} are false in general.
\end{theorem}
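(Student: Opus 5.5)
The plan is to build both counterexamples from a single mechanism: a ring $R$ whose spectrum is a union of components of different dimensions, glued only at the closed point or along a small locus, together with a perfect module $M$ (for instance $R/(\underline{x})$ for a regular sequence, or the cokernel of a map of free modules) whose support is forced to sit on the large component while the complementary module $N$ lives on another component. The dimension inequality is first of all a statement about supports. If $\Supp M$ can be made to have dimension larger than its grade predicts, by letting $M$ "see" a component of $R$ of dimension bigger than where its defining equations cut things down, then pairing it with a module $N$ of positive dimension meeting $\Supp M$ only at $\m$ breaks $(i)$.

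Concretely, for the three-dimensional example I would start from a complete local domain or an equidimensional-failing ring $R=S/I$, with $S=k[[x_1,\dots,x_n]]$. I would choose $R$ so that it contains an element, or a short complex of free modules, that is a nonzerodivisor on $R$ but whose vanishing locus drops in dimension by less than expected on some component. This is the key point. Nonzerodivisor conditions are controlled by associated primes, including embedded ones, while dimension is controlled only by minimal primes. So an embedded prime, or a non-equidimensional structure such as $R$ having a $3$-dimensional component and a lower-dimensional component, lets $\pd M<\infty$ coexist with large $\dim M$.

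The steps are as follows.
\begin{enumerate}
\item Write down an explicit $R$ of dimension $3$, likely non-Cohen--Macaulay and non-equidimensional, and an explicit finite free resolution $0\to R^{b}\to R^{a}\to R\to M\to 0$. Verify exactness via the Buchsbaum--Eisenbud criterion: the ideals of minors have grade $\ge 1,2$.
\item Compute $\dim M$ from $\Supp M=V(I_1)$.
\item Exhibit $N=R/\mathfrak p$ with $V(\mathfrak p)\cap\Supp M=\{\m\}$ and $\dim M+\dim N=4>3$.
\end{enumerate}
For the four-dimensional grade counterexample, I would aim instead for $M$ perfect with $\grade M<\dim R-\dim M$. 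Here $M$ is supported on a low-dimensional component, but its annihilator contains few regular elements because an embedded or lower-dimensional component absorbs them. I would verify $\grade M$ via the vanishing of $\Ext^i_R(M,R)$ or via depth counting with Auslander--Buchsbaum, so that $\pd M=\depth R-\depth M$ and grade is at most $\pd M$.

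The main obstacle is step 1: making $\pd M$ genuinely finite. Finite projective dimension is very rigid. By the intersection theorem and the new intersection theorem (true, via Andr\'e), naive non-equidimensional gluings usually force $\dim$ to behave. For instance, gluing along the closed point makes $\depth R\le 1$, which caps $\pd M$ and forces $M$ to be nearly trivial. So the construction must be subtle: $R$ should have depth $\ge 2$ (resp.\ $\ge 3$), and the Buchsbaum--Eisenbud grade conditions must hold on the whole ring while $\Supp M$ still meets a large component. My first attempt would be to take $R$ as an idealization-free quotient where the complex is a Koszul-type complex on elements $f,g$ forming a regular sequence on $R$. The elements are chosen so that $V(f,g)$ contains an entire $2$-dimensional component of the reduced structure. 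This requires that on that component $f,g$ are zerodivisor-free only thanks to nilpotent or embedded structure, and I expect to iterate on examples with computer algebra to find such $R$.
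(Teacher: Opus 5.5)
\paragraph{Verdict: the proposal has a genuine gap.} It never constructs $R$ and $M$: step~1 is left to a computer search. Moreover, the mechanisms you suggest for that step provably cannot work.

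\begin{enumerate}
\item[(a)] Perfect modules satisfy all three statements of Conjecture~\ref{conj: Peskine--Szpiro}; this covers $R/(\underline{x})$ for a regular sequence and your Koszul complex on a regular sequence $f,g$. Perfection means $\grade M=\pd M$, while one always has $\grade M\le\codim M\le\dim R-\dim M\le\pd M$. So your target ``$M$ perfect with $\grade M<\dim R-\dim M$'' is self-contradictory. Likewise, the intersection theorem gives $\dim N\le\pd M=\dim R-\dim M$ for perfect $M$.
\item[(b)] You cannot have $f,g$ regular on $R$ with $V(f,g)$ containing a whole component $V(\q)$. Minimal primes are associated primes, so every element of $\q$ is a zerodivisor, whatever embedded or nilpotent structure $R$ carries.
\item[(c)] In dimension $3$, both a length-$2$ resolution and a non-Cohen--Macaulay $R$ are excluded. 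By the intersection theorem and Auslander--Buchsbaum, $\dim N\le\pd M\le\depth R$. Go through $\dim M\in\{0,2,3\}$ as in the proof of Proposition~\ref{prop: small dimensions}. For $\dim M=2$ one gets $\grade M=1$ and a nonzerodivisor $z$ with $V(z)\subseteq\Supp M$, hence $\dim N\le 1$. So a counterexample needs $\dim M=1$ and $\dim N=3$. Then $\pd M=\depth R=3$, and $R$ is Cohen--Macaulay, hence equidimensional. This is exactly the situation of the paper's $R=k[[x,y,z,w,u]]/(xw-yz,ux,uz)$, whose two components $A$ and $B$ both have dimension $3$.
\end{enumerate}

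\paragraph{The missing idea.} You need an input that violates generalized vanishing of $\chi$. By Theorem~\ref{thm: vanishing implies grade}, a counterexample to the grade conjecture over a complete ring must have $\chi(M,R/J)\neq 0$ for some $J$ with $\dim M+\dim R/J<\dim R$. By Corollary~\ref{cor: liftable}, nothing liftable to a regular ring can work. A search among Koszul-type or short explicit resolutions therefore stays where the conjectures are known to hold.

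The paper's route has four steps.
\begin{enumerate}
\item It starts from the Dutta--Hochster--McLaughlin module $L^\vee$ over $A=k[[x,y,z,w]]/(xw-yz)$. This module has $\chi(L^\vee,A/P)=-1$, and $\Tor_0$, $\Tor_1$ are killed by $\m$. So for $M_A=L^\vee\oplus A/(w,y+z)$ and $C=A/P$, the complex $G=M_A\otimes^{\mathbb L}_A C$ has $H^0\cong H^{-1}\cong k^6$.
\item Lemma~\ref{lem: key lifting lemma} writes $G\cong M_B\otimes^{\mathbb L}_B C$ with $M_B$ of finite length over $B=C[[u]]$. For the grade case it is tensored up to $C[[u,v]]$.
\item Milnor patching over $A\times_C B$ produces a perfect complex with cohomology only in degree $0$. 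The gluing is along the $2$-dimensional $C$, not the closed point. The result is a non-perfect module $M$ of finite projective dimension with $\dim M=1$ and $M\otimes_R B\cong M_B$ of finite length.
\item In the four-dimensional case, $\Ann(M')\subseteq(y,z,w,u,v)R'$, a prime of height $2$, so $\grade M'\le 2<3=\dim R'-\dim M'$.
\end{enumerate}
None of these ingredients appears in your plan.
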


The two examples arise from a common gluing construction.  Starting with a
module of finite projective dimension from the
Dutta--Hochster--McLaughlin's example in \cite{DuttaHochsterMcLaughlin}, we obtain a perfect complex
over a two-dimensional regular local ring whose only two nonzero cohomology
modules are isomorphic finite dimensional vector spaces.  A lifting lemma
realizes this complex as a derived restriction from a power series extension,
after which Milnor patching over a fiber product ring produces the desired
module of finite projective dimension. Using one new variable gives a
counterexample to the dimension inequality, and using two variables gives a counterexample to the grade conjecture; see the beginning of Section~2 for the a more detailed explanation of the overall strategy.

On the other hand, a number of important positive results of Conjecture~\ref{conj: Peskine--Szpiro} are
known and we briefly record them here.

\begin{enumerate}
    \item All three statements hold when $M$ is perfect, by the new intersection theorem \cite{RobertsIntersectionTheorem}.
    \item All three statements hold when $R$ is $\mathbb{N}$-graded over an Artinian local ring and $M$ and $N$ are graded  \cite{PeskineSzpiroSyzygiesMultiplicities}.
    \item All three statements hold when $M$ is liftable to a regular local ring, see Corollary~\ref{cor: liftable}.
    \item The dimension inequality holds when $R$ is a hypersurface in an equal characteristic or unramified mixed characteristic regular local ring \cite{HochsterDimensionIntersectionHypersurface}.
    \item The grade conjecture holds when $R$ is equidimensional and catenary, this follows from \cite[Proposition 6.3.3]{RobertsBook}.
    \item The dimension inequality holds when $\dim(R)\leq 2$ and the grade conjecture holds when $\dim(R)\leq 3$, see Proposition~\ref{prop: small dimensions}.
\end{enumerate}

The last item shows, in particular, that Theorem~\ref{thm: main} is optimal in terms of the dimensions of the rings in the counterexamples. Even though Conjecture~\ref{conj: Peskine--Szpiro} is false in general, the following natural question remains open. 

\begin{question}
Let $R$ be a Noetherian local ring and let $M, N$ be finitely generated $R$-modules of finite projective dimension so that $M\otimes_R N$ has finite length. Does $\dim(M) + \dim(N) \leq \dim(R)$ hold?
\end{question}

The paper is organized as follows.  Section~2 explains the construction of the
counterexamples. Section~\ref{sec: DHM} is a detailed analysis on the properties of Dutta--Hochster--McLaughlin's example that will be needed henceforth; Section~\ref{sec: lifting lemma} proves an elementary lifting lemma that is needed for the gluing; Sections~\ref{sec: counterexample dimension inequality} and \ref{sec: counterexample grade conjecture} then use Milnor patching to construct the counterexamples to the dimension inequality and the grade conjecture, respectively. Appendix~\ref{app: negative intersection} revisits the negative intersection multiplicities underlying the construction, following \cite{RobertsSrinivasModulesoffinitelengthandfiniteprojectivedimension,KuranoNumericalequivalenceonChowgroupsoflocalrings}.  Appendix~\ref{app: positive results} collects some positive results, including the liftable case and the low-dimensional bounds
that show the examples are dimensionally optimal.

\subsection*{Acknowledgment} 
I would like to thank Bhargav Bhatt for many insightful discussions on the counterexamples. I would also like to thank Nawaj KC and Kaito Kimura for related discussions on Peskine--Szpiro's conjectures. The author was supported by NSF grant DMS-2302430.

\subsection*{AI disclosure}
The counterexample to the dimension inequality in Conjecture~\ref{conj: Peskine--Szpiro} was found by OpenAI's ChatGPT 5.6 Sol Pro. After several unsuccessful initial prompts, the author suggested that the AI make use of the Dutta--Hochster--McLaughlin's example in \cite{DuttaHochsterMcLaughlin} together with some gluing constructions that the AI had already been pursuing. The actual gluing strategy in Section~\ref{sec: counterexample dimension inequality}, the detailed analysis of the Dutta--Hochster--McLaughlin's example in Section~\ref{sec: DHM}, as well as the lifting lemma in Section~\ref{sec: lifting lemma} are all discovered by the AI. The author then observed that a minimal modification of the strategy also immediately yields a counterexample to the grade conjecture. The author wrote the paper, with assistance from ChatGPT in producing the data and the two tables in Section~\ref{sec: DHM}. The materials in the two appendices are not original, and are not produced by the AI.

\newpage
\section{The counterexamples}

We first explain the overall strategy of the construction, which is actually remarkably simple. We start with a Cohen--Macaulay local ring $A$ for which there is a finitely generated $A$-module $M_A$ of finite projective dimension with $\dim(M_A)=1$, and a height one prime $P$ so that $A/P$ is regular and $\chi(M_A, A/P)=0$. It is well-known that such example exist: a slight modification of the famous counterexample of the generalized vanishing in \cite{DuttaHochsterMcLaughlin} will work (see Section~\ref{sec: DHM}). Let $C:=A/P$, $B:=C[[u]]$, and $G:= M_A\otimes_A^\mathbb{L}C$. Next, we want to construct a finite length $B$-module $M_B$ so that $M_B \otimes_B^\mathbb{L}C \cong G$ in $D(C)$. This step requires a computation, see Section~\ref{sec: lifting lemma}. Finally, we set $R:= A \times_C B$. Since by our construction, $M_A \otimes_A^\mathbb{L}C \cong M_B \otimes_B^\mathbb{L}C$, Milnor patching for perfect complexes implies that there is a perfect complex $F$ over $R$ so that $F\otimes_R^\mathbb{L}A \cong M_A$ in $D(A)$ and $F\otimes_R^\mathbb{L}B \cong M_B$ in $D(B)$. A small computation shows that $R$ is Cohen--Macaulay and $F\cong H^0(F)$ in $D(R)$. Thus $M:=H^0(F)$ is a finitely generated module of finite projective dimension over $R$ such that $\dim(M)=1$ (since $\dim(M_A)=1$ and $\dim(M_B)=0$) and $M\otimes_RB\cong M_B$ has finite length. In particular, 
$$\dim(M) + \dim(B) = 1+\dim(R) > \dim(R).$$
This gives a counterexample to the dimension inequality (and thus the strong intersection conjecture), see Section~\ref{sec: counterexample dimension inequality}. The construction of the counterexample to the grade conjecture is very similar, one simply increases the dimension of $B$: instead of using $C[[u]]$, one uses $C[[u,v]]$, see Section~\ref{sec: counterexample grade conjecture} for more details.

\subsection{Dutta--Hochster--McLaughlin's example}
\label{sec: DHM}
The goal of this section is to construct a module of finite length and finite projective dimension over a Cohen--Macaulay local ring with negative intersection multiplicity with a codimension-one quotient, and that the relevant $\Tor$'s are annihilated by the maximal ideal. We will show that the example studied in \cite{DuttaHochsterMcLaughlin} satisfies all these conditions via explicit computations.

We start by introducing some notations. Let $k$ be a field and let 
\begin{equation*}
A =\frac{k[[x,y,z,w]]}{(xw-yz)} \,\ \text{ and } \,\
P=(x,z).
\end{equation*}
Let $L$ be the module of finite length and finite projective dimension in \cite[Theorem 5.1]{DuttaHochsterMcLaughlin}. Viewed as a $k$-vector space, $L$ has dimension $15$. Chan--Huang record its complete action table in \cite[Section~5.4]{ChanHuang}. We consider 
\[
L^\vee \cong \operatorname{Hom}_k(L,k).
\]
where $L^\vee$ is the Matlis dual of $L$ and the isomorphism follows as $L$ has finite length over $A$. By local duality, $L^\vee \cong \Ext_A^3(L, A)$ and applying $\Hom_A(-, A)$ to a finite free resolution of $L$ shows that $\pd(L^\vee)=3$. By \cite[Theorem 4.3]{DuttaHochsterMcLaughlin}, we have that $\chi(L^\vee, A/P)=-1$.\footnote{Note that the variables $x,y,z,w$ corresponds to $x_1,x_2,x_3,x_4$ in \cite{DuttaHochsterMcLaughlin} and the ideal $P=(x,z)$ corresponds to $(x_1, x_3)$ (comparing \cite[Section~5.4]{ChanHuang} and \cite[Theorem 5.1]{DuttaHochsterMcLaughlin}), thus \cite[Theorem 4.3]{DuttaHochsterMcLaughlin} under our notation says that $\chi(L^\vee, A/(x,y))=1$ and hence $\chi(L^\vee, A/(x,z))=-1$.} 

In the rest of this section we will prove that $\m=(x,y,z,w)$ annihilates $\Tor_0^A(L^\vee, A/P)$ and $\Tor_1^A(L^\vee, A/P)$ (by the acyclic lemma we know that the higher Tor's vanish). We show this via an explicit computation. With notations as in \cite[Section~5.4]{ChanHuang}, a basis of $L$ is given by 
\[
 u_1,\ldots,u_5,\quad v_1,\ldots,v_4,\quad w_1,\ldots,w_6
\]
and we let
\[
 a_i=u_i^\vee,\qquad b_j=v_j^\vee,\qquad c_\ell=w_\ell^\vee
\]
be the dual basis of $L^\vee$.  The action table of $\{x,y,z,w\}$ is the transpose of the table in \cite[Section~5.4]{ChanHuang}. The entire table is recorded as follows.

\begin{center}
\renewcommand{\arraystretch}{1.12}
\begin{tabular}{c|cccc}
\toprule
basis vector & $x$ & $y$ & $z$ & $w$\\
\midrule
$a_1$ & $b_1$ & $0$ & $b_3$ & $c_1$\\
$a_2$ & $b_2$ & $0$ & $b_4$ & $c_2$\\
$a_3$ & $0$ & $c_1$ & $c_6$ & $c_4$\\
$a_4$ & $c_5$ & $c_2$ & $0$ & $c_5$\\
$a_5$ & $c_6$ & $c_3$ & $0$ & $c_6$\\
\midrule
$b_1$ & $c_1$ & $0$ & $c_3$ & $0$\\
$b_2$ & $c_2$ & $0$ & $c_4$ & $0$\\
$b_3$ & $c_3$ & $0$ & $c_5$ & $0$\\
$b_4$ & $c_4$ & $0$ & $c_6$ & $0$\\
\midrule
$c_1$ & $0$ & $0$ & $0$ & $0$\\
$c_2$ & $0$ & $0$ & $0$ & $0$\\
$c_3$ & $0$ & $0$ & $0$ & $0$\\
$c_4$ & $0$ & $0$ & $0$ & $0$\\
$c_5$ & $0$ & $0$ & $0$ & $0$\\
$c_6$ & $0$ & $0$ & $0$ & $0$\\
\bottomrule
\end{tabular}
\end{center}
By direct inspection, 
\[
 (x,z)L^\vee=\langle b_1,\ldots,b_4,c_1,\ldots,c_6\rangle,
\]
and thus
\begin{equation*}
 \Tor_0^A(L^\vee,A/P)=L^\vee/(x,z)L^\vee
 =\langle\bar a_1,\ldots,\bar a_5\rangle
\end{equation*}
Examining the table we see that $ya_i, wa_i\in (x,z)L^\vee$ for all $i$, so $\m$ annihilates $\Tor_0^A(L^\vee,A/P)$.

We next note that $A/P$ has a two-periodic resolution
\begin{equation*}
 \cdots\longrightarrow A^2\xrightarrow{\Psi}A^2
 \xrightarrow{\Phi}A^2\xrightarrow{(x\ z)}A\longrightarrow0,
\end{equation*}
where
\[
 \Phi=\begin{pmatrix}-z&w\\x&-y\end{pmatrix},
 \qquad
 \Psi=\begin{pmatrix}y&w\\x&z\end{pmatrix},
 \qquad
 \Phi\Psi=\Psi\Phi=(xw-yz)I_2.
\]
After tensoring with $L^\vee$ and writing
\[
 d_1=(x\ z),\qquad
 d_2=\begin{pmatrix}-z&w\\x&-y\end{pmatrix},
 \qquad
 d_3=\begin{pmatrix}y&w\\x&z\end{pmatrix},
\]
we obtain
\begin{equation*}
 \cdots\longrightarrow (L^\vee)^2\xrightarrow{d_3}(L^\vee)^2
 \xrightarrow{d_2}(L^\vee)^2\xrightarrow{d_1}(L^\vee)\longrightarrow0.
\end{equation*}
We now compute the ranks of $d_1$ and $d_2$, viewed as maps of finite dimensional $k$-vector spaces. Either by examining the table or using that $\dim_kL^\vee =15$ and $\dim_k\Tor_0^A(L^\vee, A/P)=5$, we obtain that $\rank_kd_1=10$. The rank of $d_2$ can be computed by analyzing the matrix; alternatively, as we have $\chi(L^\vee, A/P)=-1$ and $\dim_k\Tor_0^A(L^\vee, A/P)=5$, it follows that $\dim_k\Tor_1^A(L^\vee, A/P)=6$ and thus $\rank_kd_2=30-10-6=14$.

We need to find a basis for $\Image(d_2)$. Use the ordered basis
\[
 (a_1,\ldots,a_5,b_1,\ldots,b_4,c_1,\ldots,c_6)
\]
in each copy of $L^\vee$ and select the 14 source columns
\begin{equation*}
 \mathcal C=\bigl((a_i,0)_{i=1}^5,(b_j,0)_{j=1}^4,(0,a_i)_{i=1}^5\bigr)
\end{equation*}
and the 14 target coordinates
\begin{equation*}
 \mathcal R=\bigl((b_3,0),(b_4,0),(c_\ell,0)_{\ell=1}^6,
 (0,c_\ell)_{\ell=1}^6\bigr).
\end{equation*}
By direct computation from the action table and the definition of $d_2$, one checks that the rows in $d_2(\mathcal C)$ correspond to $\mathcal R$ constitute a $14\times 14$ matrix whose determinant is $-1$. It follows that $d_2(\mathcal C)$ is a basis for $\Image(d_2)$.

Finally, we show that $\Tor_1^A(L^\vee, A/P)$ is annihilated by $\m$. 
We first show that the following six elements in $(L^\vee)^2$ represent a basis of $\Tor_1^A(L^\vee, A/P)$:
\begin{equation*}
\begin{aligned}
 \gamma_1&=(a_3,0),&
 \gamma_2&=(-a_5,a_3),&
 \gamma_3&=(0,a_4),\\
 \gamma_4&=(0,a_5),&
 \gamma_5&=(-a_4,b_3),&
 \gamma_6&=(-a_5,b_4).
\end{aligned}
\end{equation*}
It is easy to see that each element above is killed by $d_1$. To certify independence modulo $\Image(d_2)$, take the 14 columns $d_2(\mathcal C)$ from $ \mathcal R$ and append $\gamma_1,\ldots,\gamma_6$.  On the 20 target coordinates
\begin{equation*}
\begin{aligned}
 \mathcal R'=\bigl(& (a_3,0),(a_4,0),(a_5,0),(b_3,0),(b_4,0),
 (c_\ell,0)_{\ell=1}^6,\\
 & (0,a_3),(0,a_4),(0,a_5),(0,c_\ell)_{\ell=1}^6\bigr),
\end{aligned}
\end{equation*}
the resulting $20\times20$ determinant is $-1$.  Since $\dim_k\ker (d_1)=20$, these $20$ vectors form a basis of $\ker(d_1)$, proving the assertion. It remains to check that $y$ and $w$ kill these six cycles.  For each $i$, the following table gives elements $\rho_i,\sigma_i\in (L^\vee)^2$ satisfying
\begin{equation*}
 d_2(\rho_i)=y\gamma_i,
 \,\ \text{ and } \,\ d_2(\sigma_i)=w\gamma_i.
\end{equation*}
Recall that $d_2(s,t)=(-zs+wt,\,xs-yt)$.

\vspace{0.3em}

\begin{center}
\renewcommand{\arraystretch}{1.22}
\begin{tabular}{c|>{\raggedright\arraybackslash}p{0.39\textwidth}|>{\raggedright\arraybackslash}p{0.39\textwidth}}
\toprule
$i$ & $\rho_i$ with $d_2(\rho_i)=y\gamma_i$ & $\sigma_i$ with $d_2(\sigma_i)=w\gamma_i$\\
\midrule
$1$ & $(0,a_1)$ & $(-a_3-b_2-b_3,-a_4-a_5)$\\
$2$ & $(b_1,0)$ & $(b_4,0)$\\
$3$ & $(-a_3-b_3,-a_4-a_5)$ & $(a_4,0)$\\
$4$ & $(-a_3,-a_5)$ & $(a_5,0)$\\
$5$ & $(0,-a_2)$ & $(a_3+b_3,a_5)$\\
$6$ & $(a_3+b_1+b_2+b_3,a_3+a_4+a_5)$ & $(a_3,0)$\\
\bottomrule
\end{tabular}
\end{center}

\vspace{0.3em}

\noindent These follow from direct computations from the action table and the definition of $d_2$. Therefore $y$ and $w$ (and hence $\m$) annihilate $\Tor_1^A(L^\vee, A/P)$. We summarize what we have shown regarding $L^\vee$:
\begin{equation*}
 \Tor_i^A(L^\vee,A/P)\cong
 \begin{cases}
 k^5,&i=0,\\
 k^6,&i=1,\\
 0,&i\ge2,
 \end{cases}
 \qquad\text{as $A$-modules.}
\end{equation*}

Let $M_A = L^\vee \oplus A/(w, y+z)$ and $C= A/P\cong k[[y,w]]$. Since $w, y+z$ is a regular sequence on $A$, we have $M_A$ is a finitely generated $A$-module of finite projective dimension with $\dim(M_A)=1$. Moreover, since the images of $w, y+z$ in $C$ is a regular system of parameters, we have $(A/(w, y+z))\otimes_A^\mathbb{L}C \cong k$ in $D(C)$. Thus, setting
$$G:=M_A \otimes_A^\mathbb{L}C,$$
we know that $G$ is a perfect complex in $D(C)$ with $H^0(G)\cong H^{-1}(G)\cong k^6$ as $C$-modules (i.e., the two nontrivial cohomologies of $G$ are both annihilated by $\m_C=(y,w)$).

\subsection{A lifting lemma}
\label{sec: lifting lemma}
Continuing the notation as in the last paragraph in the previous section, we want to find a finite length module over $C[[u]]$ so that the perfect complex $G$ is quasi-isomorphic to the module quotient by $u$ in the derived sense. In general, we do not know a satisfactory criterion for the existence of such modules. However, in our scenario, the cohomologies of $G$ are annihilated by $\m$ and we can turn the question into an explicit linear algebra computation.

\begin{lemma}
\label{lem: key lifting lemma}
Let $C=k[[y,w]]$ and $B=C[[u]]$. Suppose $G$ is a perfect complex in $D(C)$ so that $H^0(G)\cong H^{-1}(G)\cong k^n$ for some $n>1$ and $H^i(G)=0$ for $i\neq 0,-1$. Then there exists a finite length $B$-module $M_B$ so that $M_B\otimes_B^\mathbb{L}C\cong G$ in $D(C)$.
\end{lemma}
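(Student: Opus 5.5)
The plan has two parts. First, classify the possible complexes $G$ up to isomorphism in $D(C)$. Second, build an explicit finite length $B$-module for each isomorphism class; the hypothesis $n>1$ should enter only in the second part.

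\emph{Classification.} Since $C$ is regular, every bounded complex with finitely generated cohomology is perfect, so only the cohomology and the gluing data matter. Truncation gives an exact triangle
\[
H^{-1}(G)[1]\to G\to H^0(G)\xrightarrow{\ \theta\ } H^{-1}(G)[2],
\]
so $G$ is determined by the class $\theta\in \Hom_{D(C)}(k^n,k^n[2])=\Ext^2_C(k^n,k^n)\cong M_n(\Ext^2_C(k,k))\cong M_n(k)$. Here $\Ext^2_C(k,k)\cong k$ is spanned by the Koszul class of $y,w$. Changing bases of $H^0$ and $H^{-1}$ acts on $\theta$ by $\theta\mapsto g\theta h^{-1}$. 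Hence the isomorphism class of $G$ is determined by $r=\rank\theta$, with $0\le r\le n$.

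\emph{Target shape on the $B$-side.} For a finite length $B$-module $M_B$, the object $M_B\otimes^{\mathbb L}_B C$ is the two-term complex $M_B\xrightarrow{u}M_B$ in degrees $-1,0$. Its cohomology is $H^{-1}=\ker u$ and $H^0=M_B/uM_B$. So I need a module on which $y$ and $w$ kill both $\ker u$ and $M_B/uM_B$, with $\ker u\cong \coker u\cong k^n$, and whose induced class $\theta$ has the prescribed rank $r$. The modules are additive, and $\theta$ is additive under direct sums. Therefore it suffices to realize $r=0$ in size $1$ and to realize an invertible $\theta$ in every size $m\ge 2$. For $r=0$, the module $M_B=k$ with $u=y=w=0$ gives $k\oplus k[1]$. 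Taking direct sums of such pieces then handles $r=0$ and every $1\le r\le n$ with $n-r\neq 1$. In the remaining case $r=n-1$ with $n\ge 2$, I would construct a single $n\times n$ block of rank $n-1$ directly.

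\emph{The actual construction (the main obstacle).} I will use graded modules concentrated in a few degrees, with basis $e_1,\dots,e_m$ (top), middle vectors, and $f_1,\dots,f_m$ (socle). In these modules $u$ maps the top onto $uM$ and $y,w$ map the top into $uM$ and the middle into $\ker u$. Commutativity of $u,y,w$ becomes a set of matrix equations.

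I then compute $\theta$ explicitly. Lift the surjection $C^m\to M_B/uM_B$ through a partial free resolution of $M_B/uM_B$, map the Koszul relation $(y,w)$ into the complex $[M_B\xrightarrow{u}M_B]$, and read off the component landing in $\ker u$. I expect this to yield $\theta$ as a commutator-type expression $XY-YX$ in the matrices $X,Y$ recording how $y$ and $w$ move through $u$. The Koszul class is antisymmetric in $y$ and $w$, and commutativity of the actual actions forces a correction term of this form.

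If that is right, then $n>1$ is exactly what is needed. A commutator has trace zero, so for $m=1$ only $\theta=0$ occurs. For every $m\ge 2$, every trace-zero matrix is a commutator, and trace-zero matrices of every rank $0,\dots,m$ exist; for example, a cyclic permutation matrix twisted by a sign is invertible with trace zero. This also covers the troublesome case $r=n-1$ directly.

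The hard step is pinning down the formula for $\theta$ in terms of the module data, and verifying that the resulting module has $\ker u$ and $\coker u$ killed by $(y,w)$. If the commutator guess is wrong, the fallback is to compute $\theta$ directly for a specific small family, for example $m=2$ with $X,Y$ elementary matrices. Invertibility there can be checked by hand and then propagated by direct sums as above.
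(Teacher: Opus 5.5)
Your framework matches the paper's. You identify $M_B\otimes^{\mathbb{L}}_B C$ with $[M_B\xrightarrow{u}M_B]$, reduce to realizing classes in $\Ext^2_C(k^n,k^n)\cong M_n(k)$, obtain the class as a commutator, and invoke Albert--Muckenhoupt, with $n>1$ entering because invertible trace-zero matrices need $n\ge 2$.

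\textbf{The gap.} The step that carries all the content of the lemma is only conjectured (``I expect'', ``if that is right''). That step is an explicit module together with the verification that its class is $XY-YX$ for \emph{unconstrained} $X,Y$. Your proposed shape has a middle layer, on which commutativity of $u,y,w$ ``becomes a set of matrix equations''. That leaves open whether $X$ and $Y$ can be arbitrary. As written, this is a gap.

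\textbf{How the paper closes it.} It uses the simplest possible module. Take $N=V_0\oplus V_1$ with $V_0\cong V_1\cong k^n$. Let $u,y,w$ kill $V_1$ and send $V_0$ to $V_1$ via $U, UK_y, UK_w$, with $U$ invertible.
\begin{itemize}
\item All products of two of $u,y,w$ vanish, so commutativity is automatic.
\item $\ker u=V_1=uN\supseteq yN+wN$, so $\ker u$ and $\coker u$ are both $k^n$ and both are killed by $(y,w)$.
\item Lift the Koszul resolution by $\alpha(e_i)=\bar e_i$, $\beta(e_i,0)=K_y\bar e_i$, $\beta(0,e_i)=K_w\bar e_i$, all in $V_0$. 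This gives $\gamma=U(K_yK_w-K_wK_y)$.
\end{itemize}
Your rank classification lets you take $U=I$. You then only need a trace-zero matrix of each rank $0\le r\le n$: for instance $\sum_{i\le r}E_{i,i+1}$ for $r<n$, and an $n$-cycle permutation matrix for $r=n$. The paper instead keeps $U$ and chooses it with $\operatorname{tr}(U^{-1}K)=0$, using a fixed-point-free permutation matrix. This is the same $GL_n$ freedom, absorbed into the $u$-action.

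\textbf{A counting error.} Your direct-sum reduction is miscounted. Sums of copies of $k$ (size $1$, $\theta=0$) and invertible blocks of size $m\ge 2$ produce exactly the ranks $r=0$ and $2\le r\le n$. So the case it misses is $r=1$, not $r=n-1$; the latter is covered once $n\ge 3$. Trace-zero rank-one matrices such as $E_{12}$ exist for $n\ge 2$, so realizing the class directly in size $n$ as above makes the direct-sum step unnecessary.
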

\begin{proof}
We first observe that by assumption we have an exact triangle $H^{-1}(G)[1]\to G \to H^0(G)\xrightarrow{+1}$. Thus $G$ is determined by a class in $\Ext_C^1(H^0(G), H^{-1}(G)[1])\cong \Ext_C^2(k^n, k^n)$. Therefore, it suffices to show that for any $\eta \in \Ext_C^2(k^n, k^n)$, we can find a finite length $B$-module $N$ so that $\eta$ is represented by the exact sequence: 
\begin{equation}
\label{eqn: Yoneda class}
0 \to k^n \to N \xrightarrow{u} N \to k^n \to 0 \tag{$\dagger$}
\end{equation}
where we view this as a sequence of finite length $C$-modules and we view $u$ as a $C$-linear map. 

We will construct $N$ as follows. Let $V_0 \cong V_1\cong k^n$ and let $N=V_1\oplus V_0$ as $k$-vector spaces. Let $u,y,w$ act on $N$ by sending $V_1$ to $0$ and assigning for every $v\in V_0$
\begin{equation*}
u(v)=Uv,\qquad y(v)=UK_yv,\qquad w(v)=UK_wv,
\end{equation*}
with all output values regarded as elements of $V_1$, and $U,K_y, K_w$ are $n\times n$ matrices with entries in $k$ so that $U$ is invertible.  Note that every product of two elements among $u,y,w$ acts as zero. The three operators therefore commute and define a finite length $B$-module with $\m_B^2N=0$. Moreover, since $U$ is invertible, we have $N/uN\cong V_0 \cong k^n$ and $\ker(N\xrightarrow{u} N)\cong V_1\cong k^n$. To determine the Yoneda class of (\ref{eqn: Yoneda class}) in $\Ext_C^2(k^n, k^n)$, we consider the following diagram:
\[
\xymatrix{
0 \ar[r] & C^n  \ar[r]^-{(-w, y)^T} \ar@{.>}[d]^\gamma & C^n \oplus C^n \ar[r]^-{(y,w)} \ar@{.>}[d]^\beta & C^n \ar[r] \ar@{.>}[d]^\alpha & k^n \ar[r] \ar@{=}[d] & 0\\ 
0 \ar[r] & k^n  \ar[r]  & V_0 \oplus V_1 \ar[r]^-{u}  & V_0 \oplus V_1 \ar[r]  & k^n \ar[r]  & 0\\
}
\]
where the maps $\alpha,\beta,\gamma$ are arbitrary lifts to make the diagram commutative. By definition of the $u,y,w$ actions on $V_0 \oplus V_1$, it is straightforward to check that one can define 
\begin{align*}
  \alpha(e_i) &= (\overline{e_i}, 0) \\  
\beta(e_i,e_j) & = (K_y\overline{e_i}+ K_w\overline{e_j}, 0)\\
\gamma(e_i) & = U(K_yK_w-K_wK_y)\overline{e_i}
\end{align*}
where $e_i$ (resp., $\overline{e_i}$) denotes the $i$-th basis vector in $C^n$ (resp., $k^n$).

Now the Yoneda class of (\ref{eqn: Yoneda class}) is represented by the map $\gamma$. Unwinding definitions, this is given by 
$$U(K_yK_w-K_wK_y) \cdot \xi \in \Ext_C^2(k,k)^{\oplus n^2} \cong \Ext_C^2(k^n, k^n)$$
where $\xi$ denotes the canonical class in $\Ext_C^2(k, k)\cong k$. Thus, to show that every class in $\Ext_C^2(k^n, k^n)$ can be represented by an exact sequence as in (\ref{eqn: Yoneda class}), it suffices to show that for every $n\times n$ matrix $K\in \text{M}_{n\times n}(k)$, we can find $U, K_1, K_2 \in \text{M}_{n\times n}(k)$ with $U$ invertible so that 
$$K = U(K_1K_2-K_2K_1).$$
This is elementary: first observe that we can choose $U$ so that $\text{tr}(U^{-1}K)=0$, indeed, if $U_1, U_2$ are invertible matrices so that
\[K=
 U_1\begin{pmatrix} I_r &0\\0&0\end{pmatrix}U_2
\]
where $r=\rank K$, then we can take $U=U_1PU_2$ where $P$ is any fixed-point-free permutation matrix (this is the only place we used $n>1$), and then we use the fact that every trace zero matrix can be written as the commutator of two matrices, for example see \cite{AlbertMuckenhouptMatricesTraceZero}.
\end{proof}

\begin{remark}
Lemma~\ref{lem: key lifting lemma} is not true if $n=1$. In particular, if $G$ corresponds to the nontrivial element in $\Ext_C^2(k, k)$ then we claim there is no $B$-module $N$ so that $N\otimes_B^\mathbb{L}C\cong G$. For suppose such $N$ exists, then viewed as a $k[[u]]$-module, $N\cong k[[u]]/(u^m)$ for some $m\geq 1$ by Nakayama's lemma. Let $y,w$ act on $N$ via multiplication by $f(u),g(u)$. Consider the diagram:
\[
\xymatrix{
0 \ar[r] & C  \ar[r]^-{(-w, y)^T} \ar@{.>}[d]^\gamma & C^2 \ar[r]^-{(y,w)} \ar@{.>}[d]^\beta & C \ar[r] \ar@{.>}[d]^\alpha & k \ar[r] \ar@{=}[d] & 0\\ 
0 \ar[r] & k  \ar[r]  & N \ar[r]^-{u}  & N \ar[r]  & k \ar[r]  & 0.\\
}
\]
where $\alpha, \beta,\gamma$ are lifts to make the diagram commutative. Without loss of generality, we assume that $\alpha(1)=1$ and $\gamma(1)=\lambda$. If $\beta(e_1)=f_1(u)$ and $\beta(e_2)=g_1(u)$, then chasing the diagram we find that 
\begin{align*}
 f(u) &= uf_1(u) \\  
 g(u) & = ug_1(u)\\
\lambda u^{m-1} & = - g(u)f_1(u) + f(u)g_1(u).
\end{align*}
But these equations force $\lambda=0$, contradicting that $G\cong N\otimes_B^\mathbb{L}C$ corresponds to the nontrivial class in $\Ext_C^2(k, k)$.
\end{remark}

\begin{corollary}
\label{cor: key lifting lemma two variable}
Let $C=k[[y,w]]$ and $B'=C[[u,v]]$. Suppose $G$ is a perfect complex in $D(C)$ so that $H^0(G)\cong H^{-1}(G)\cong k^n$ for some $n>1$ and $H^i(G)=0$ for $i\neq 0,-1$. Then there exists a finitely generated $B'$-module $M_{B'}$ with $\dim(M_{B'})=1$ so that $M_{B'}\otimes_{B'}^\mathbb{L}C\cong G$ in $D(C)$.
\end{corollary}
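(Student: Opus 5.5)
The corollary should follow from Lemma~\ref{lem: key lifting lemma} by base change along one extra power series variable. First I would apply Lemma~\ref{lem: key lifting lemma} to $G$ and obtain a finite length module $M_B$ over $B=C[[u]]$ with $M_B\otimes_B^{\mathbb{L}}C\cong G$ in $D(C)$. I would then regard $B'=C[[u,v]]=B[[v]]$ as a flat $B$-algebra and set
\[
M_{B'}:=M_B\otimes_B B' \cong M_B[[v]].
\]
Since $M_B$ has finite length, this module is finitely generated over $B'$. Its dimension is $1$: it is annihilated by $\m_B^t B'$ for some $t$, so its support lies in $V(\m_B B')$, whose ring $B'/\m_B B'\cong k[[v]]$ is one-dimensional. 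Moreover, $M_B[[v]]$ is $v$-torsion free and nonzero, so its support is not just the closed point.

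Next I would compute the derived restriction to $C$. Here $C$ is viewed as a $B'$-algebra via $u,v\mapsto 0$, so $C\cong B'/(u,v)$ and the map $B'\to C$ factors as $B'\to B'/(v)\cong B\to C$. By associativity of derived tensor products,
\[
M_{B'}\otimes_{B'}^{\mathbb{L}}C\cong \bigl(M_{B'}\otimes_{B'}^{\mathbb{L}}B\bigr)\otimes_B^{\mathbb{L}}C .
\]
To identify the inner factor, note that $v$ is a nonzerodivisor on $B'$, and also on $M_{B'}=M_B[[v]]$, which is a flat extension of $M_B$. Hence $M_{B'}\otimes_{B'}^{\mathbb{L}}B'/(v)\cong M_{B'}/vM_{B'}\cong M_B$. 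Substituting this gives
\[
M_{B'}\otimes_{B'}^{\mathbb{L}}C\cong M_B\otimes_B^{\mathbb{L}}C\cong G,
\]
which is the required isomorphism.

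There is no serious obstacle once Lemma~\ref{lem: key lifting lemma} is available. The only points to check are the identification $B'/(v)\cong B$ as $B$-algebras compatible with the map to $C$, and the flatness of $B\to B[[v]]$, which holds because $B$ is Noetherian; together these justify the base-change step.
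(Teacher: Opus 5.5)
Your proposal is correct and is the paper's own argument: the paper also takes $M_{B'}=M_B\otimes_B B'$, with $M_B$ from Lemma~\ref{lem: key lifting lemma}, and calls the result immediate. Your checks of $\dim(M_{B'})=1$ and of $M_{B'}\otimes^{\mathbb{L}}_{B'}C\cong M_B\otimes^{\mathbb{L}}_B C\cong G$ supply the routine details; the second isomorphism also follows in one line from flatness ($M_B\otimes_B B'\cong M_B\otimes^{\mathbb{L}}_B B'$) and associativity along $B\to B'\to C$.
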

\begin{proof}
This follows immediately from Lemma~\ref{lem: key lifting lemma} by taking $M_{B'}=M_B\otimes_B B'$. 
\end{proof}

\begin{remark}
In our case of interest, there is an alternative way to see that there is a finite length $B$-module $M_B$ so that $M_B\otimes^\mathbb{L}_BC\cong G$, where $G$ is the complex constructed at the end of Section~\ref{sec: DHM}. By \cite[Remark (5.3)]{DuttaHochsterMcLaughlin}, a minimal free resolution of $L^\vee$ has the form $$0 \to A^6 \to A^{17} \to A^{16} \to A^5\to L^\vee \to 0.$$ Thus after taking direct sum with a Koszul resolution of $A/(w, y+z)$ and killing $P$, a minimal free resolution of $G$ has the form $0 \to C^6 \to C^{18} \to C^{18} \to C^6\to G \to 0$. We also know from Section~\ref{sec: DHM} that $H^{-1}(G)\cong H^0(G)\cong k^6$. This implies that we must have $G\cong k^6 \oplus k^6[1]$ in $D(C)$: a minimal free resolution of the latter has the same form $0 \to C^6 \to C^{18} \to C^{18} \to C^6\to G \to 0$, and if $G$ corresponds to a nontrivial class in $\Ext_C^2(k^6, k^6)$, then by examining the mapping cone of $k^6 \to k^6[2]$, that class leads to units in one of the differentials in the mapping cone and thus the Betti numbers of $G$ cannot be $(6, 18, 18, 6)$. Once we know $G\cong k^6 \oplus k^6[1]$, it is easy to see that we can simply take $M_B=k^6$. This argument bypasses Lemma~\ref{lem: key lifting lemma}, but requires more knowledge on the module constructed in \cite{DuttaHochsterMcLaughlin}. 
\end{remark}

\subsection{Counterexample to the dimension inequality}
\label{sec: counterexample dimension inequality}
We now construct a counterexample to Conjecture~\ref{conj: Peskine--Szpiro} $(i)$. We continue to use the notation at the end of Section~\ref{sec: DHM}. Let $B=C[[u]]$ and let $R = A \times_CB$.
We can write down an explicit presentation of $R$:
$$R = \frac{k[[x,y,z,w,u]]}{(xw - yz, ux, uz)}$$
and it is easy to see that $A=R/(u)$ and $B=R/(x,z)$ are the two components of $R$.
Recall that we have a finitely generated $A$-module $M_A$ of finite projective dimension and, by Lemma~\ref{lem: key lifting lemma}, a finitely generated $B$-module $M_B$ of finite length so that 
$$M_A \otimes^\mathbb{L}_A C \cong G \cong M_B\otimes_B^\mathbb{L}C.$$
By Milnor patching for perfect complexes (\cite[Chapter I, Theorem 2.7]{WeibelKbook} or \cite[Section 5]{BenBassatBlockMilnorDescent})\footnote{More concretely, we can take minimal free resolutions of $M_A$ and $M_B$, call them $F_A$ and $F_B$. Note that $F_A \otimes_AC \cong F_B\otimes_BC$ as chain complexes since they are both minimal over $C$ and both quasi-isomorphic to $G$. Abusing notations, we use $G$ to denote this complex. Then we simply define $F$ term by term via $F_n = (F_A)_n \times_{G_n} (F_B)_n$.}, there is a perfect complex $F$ over $R$ so that 
$$F \otimes^\mathbb{L}_R A \cong M_A \,\ \text{ and } \,\ F \otimes^\mathbb{L}_R B \cong M_B$$
and an exact triangle:
$$F \to M_A\oplus M_B \to G \xrightarrow{+1}.$$
By examining the long exact sequence on cohomology, we find that $H^i(F)=0$ unless $i=0$. It follows that $M:= H^0(F)$ is a finitely generated $R$-module of finite projective dimension. Note that we have $\dim(M)=1$ since $\dim(M_A)=1$ (and the cohomologies of $M_B$ and $G$ have finite length). Finally, we have $M \otimes_R B\cong M_B$ which has finite length, but 
$$\dim(M) + \dim(B) = 1 + 3 > 3 = \dim(R).$$

\subsection{Counterexample to the grade conjecture}
\label{sec: counterexample grade conjecture}
In this section we construct a counterexample to Conjecture~\ref{conj: Peskine--Szpiro} $(iii)$. We continue to use the notation at the end of Section~\ref{sec: DHM}. Let $B'=C[[u,v]]$ and let $R' = A \times_CB'$.
We can write down an explicit presentation of $R'$:
$$R' = \frac{k[[x,y,z,w,u,v]]}{(xw - yz, ux, uz, vx, vz)}$$
and it is easy to see that $A=R'/(u, v)$ and $B'=R'/(x,z)$ are the two components of $R'$. Note that $\dim(A)=3$ and $\dim(B')=4$, so $\dim(R')=4$ and $R'$ is not equidimensional.

Consider the finitely generated $A$-module $M_A$ of finite projective dimension and, by Corollary~\ref{cor: key lifting lemma two variable}, the finitely generated $B'$-module $M_{B'}$ so that 
$$M_A \otimes^\mathbb{L}_A C \cong G \cong M_{B'}\otimes_{B'}^\mathbb{L}C.$$
By Milnor patching for perfect complexes as above, there is a perfect complex $F'$ over $R'$ so that 
$$F' \otimes^\mathbb{L}_{R'} A \cong M_A \,\ \text{ and } \,\ F' \otimes^\mathbb{L}_{R'} B' \cong M_{B'}$$
and an exact triangle:
$$F' \to M_A\oplus M_{B'} \to G \xrightarrow{+1}.$$
By examining the long exact sequence on cohomology, we find that $H^i(F')=0$ unless $i=0$.  It follows that $M':= H^0(F')$ is a finitely generated $R'$-module of finite projective dimension. Note that we have $\dim(M')=1$ since $\dim(M_A)=\dim(M_{B'})=1$ (and the cohomologies of $G$ have finite length).

We now show that $M'$ violates the grade conjecture. Since $\dim(R') - \dim(M')= 4-1=3$. It is enough to show that $\grade(M')\leq 2$. Since $\grade(M') := \depth_{\Ann_{R'}(M')}R' \leq \height(\Ann_{R'}(M'))$, it suffices to show that $\height(\Ann_{R'}(M'))\leq 2$. Note that 
$$\Ann_{R'}(M') \cdot A \subseteq \Ann_{A}(M'\otimes_{R'}A) = \Ann_A(M_A) \subseteq (w, y+z)A \subseteq (y,z,w)A.$$
It follows that $\Ann_{R'}(M') \subseteq (y,z,w,u,v)R' =:Q$. Thus we have 
$$\height(\Ann_{R'}(M')) \leq \dim(R'_Q) =\dim(A_{(y,z,w)A})=2$$
where the first equality above used the fact that $x\notin Q$ and hence localizing at $Q$ kills the other component $B'$ of $R'$.


\newpage
\appendix
\section{Negative intersection multiplicities revisited}
\label{app: negative intersection}
\counterwithin{equation}{section}
\counterwithin{theorem}{section}
\setcounter{theorem}{0}
\setcounter{equation}{0}
In this appendix, we give a streamlined proof explaining the existence of the Dutta--Hochster--McLaughlin's example \cite{DuttaHochsterMcLaughlin}, following \cite{RobertsSrinivasModulesoffinitelengthandfiniteprojectivedimension,KuranoNumericalequivalenceonChowgroupsoflocalrings} (all unexplained terminology in what follows can be found in these references).

Let $k$ be a field and let $R$ be a standard graded $k$-algebra with homogeneous maximal ideal $\m$. Let $\pi$: $Y=\Proj (R[\m t])\to \Spec(R)$ be the blowup of $\Spec(R)$ at $\m$. The exceptional divisor can be identified with $X=\Proj(R)$, and $\sO_Y(-X)$ restricts to the ample line bundle $L=\sO_X(1)$. We have a natural isomorphism $Y\cong {\Spec}_X(\oplus_{i\geq 0}L^i)$ and the inclusion $X\hookrightarrow Y$ is the zero section. We consider two pairings induced by tensor product:
\[\xymatrix{
G_0(R) \times K_0^{\m}(R)  \ar[r]  \ar@<4ex>[d]^{\pi^*} & G_0^{\m}(R) \ar[r]^-\cong & \mathbb{Z} \\
G_0(Y) \times K_0^X(Y) \ar@<4ex>[u]^{R\pi_*} \ar[r]  & G_0^X(Y) \ar[u]^{R\pi_*},
}
\]
where the isomorphism above is induced by taking the Euler characteristic. The two pairings are compatible via the projection formula:
$$R\pi_*(\mathcal{G}\otimes \pi^*\mathcal{F})=R\pi_*\mathcal{G} \otimes \mathcal{F}$$
for $\mathcal{G}\in G_0(Y)$ and $\mathcal{F}\in K_0^{\m}(R)$.

In what follows, we assume $X$ is nonsingular, thus $Y$ is also nonsingular. In this case, we have natural isomorphisms 
$$G_0(X)\cong K_0(X), \,\ G_0(Y)\cong K_0(Y), \,\ G_0(X)\cong G_0(Y)$$ 
where the last isomorphism is induced by the pull back $p^*$, where $p$: $Y\cong{\Spec}_X(\oplus_{i\geq 0}L^i) \to X$ is the natural map. Furthermore, we also have isomorphisms:
$$K_0(X)\cong G_0(X) \cong G_0^X(Y) \cong K_0^X(Y).$$
Putting these together, we rewrite the two pairings as:
\[\xymatrix{
G_0(R) \times K_0^{\m}(R)  \ar[r]  \ar@<4ex>[d]^{\beta} &  \mathbb{Z} \\
K_0(X) \times K_0(X) \ar@<4ex>[u]^{\alpha} \ar[r]  & \mathbb{Z}, \ar@{=}[u]
}
\]
where $\alpha$, $\beta$ denote the maps induced by $R\pi_*$ and $\pi^*$ (composed with the isomorphisms), and the bottom pairing is the tensor product followed by taking derived global sections, and then taking the Euler characteristic. The two parings are compatible via 
\begin{equation}
\label{eqn: compatibility via projection formula}
\langle\alpha(\mathcal{G}), \mathcal{F}\rangle= \langle\mathcal{G}, \beta(\mathcal{F})\rangle
\end{equation}
for $\mathcal{G}\in K_0(X)$ and $\mathcal{F}\in K_0^{\m}(R)$.


Unwinding definitions and the isomorphisms, we have $\alpha(\mathcal{G}) = \oplus_{i\geq 0} R\Gamma(X, \mathcal{G}(i))$. If we consider the induced map $\alpha$: $K_0(X)_\mathbb{Q}\to G_0(R)_\mathbb{Q}$, then since $[\oplus_{i\geq 0} R\Gamma(X, \mathcal{G}(i))] =[\oplus_{i\in\mathbb{Z}}\Gamma(X, \mathcal{G}(i))]$ in $G_0(R)_\mathbb{Q}$ (the terms we ignored are supported at $\m$ and so become $0$ in $G_0(R)_\mathbb{Q}$), the map $\alpha$ sits inside the following exact sequence:
\begin{equation}
\label{eqn: relating G_0 with affine cone}
K_0(X)_\mathbb{Q}\xrightarrow{\cdot h}K_0(X)_\mathbb{Q}\xrightarrow{\alpha}G_0(R)_\mathbb{Q}\to 0,
\end{equation}
where $h=[\sO_X]-[\sO_X(-1)]$. This is essentially a reformulation of the standard exact sequence $G_0(X)\rightarrow G_0(Y) \to G_0(Y\backslash X)\to 0$, after identifying $G_0(X)$, $G_0(Y)$ with $K_0(X)$ and noting that $G_0(Y\backslash X)_\mathbb{Q}=G_0(\Spec(R)\backslash \{\m\})_\mathbb{Q}=G_0(R)_\mathbb{Q}$. 
We thus have an induced diagram:
\[\xymatrix{
G_0(R)_\mathbb{Q} \times K_0^\m(R)_\mathbb{Q}  \ar[r]  \ar@<4ex>[d]^{\beta} &  \mathbb{Q} \\
K_0(X)_\mathbb{Q} \times K_0(X)_\mathbb{Q} \ar@{->>}@<4ex>[u]^{\alpha} \ar[r]  & \mathbb{Q}. \ar@{=}[u]
}
\]
The compatibility of the two pairings (\ref{eqn: compatibility via projection formula}) shows that we have an induced diagram modulo numerical equivalence:
\begin{equation}
\label{eqn: diagram mod num equiv}
\xymatrix{
\overline{G_0(R)}_\mathbb{Q} \times \overline{K_0^{\m}(R)}_\mathbb{Q}  \ar[r]  \ar@{^{(}->}@<4ex>[d]^{\overline{\beta}} &  \mathbb{Q} \\
\overline{K_0(X)}_\mathbb{Q} \times \overline{K_0(X)}_\mathbb{Q} \ar@{->>}@<4ex>[u]^{\overline{\alpha}} \ar[r]  & \mathbb{Q}, \ar@{=}[u]
}
\end{equation}
where the injectivity of $\overline{\beta}$ simply follows from the surjectivity of $\overline{\alpha}$. By \cite[Theorem 3.1]{KuranoNumericalequivalenceonChowgroupsoflocalrings}, all terms in (\ref{eqn: diagram mod num equiv}) are finite dimensional $\mathbb{Q}$-vector spaces and the two pairings are perfect pairings.

\begin{proposition}[{\cite{RobertsSrinivasModulesoffinitelengthandfiniteprojectivedimension,KuranoNumericalequivalenceonChowgroupsoflocalrings}}]
\label{prop: RobertSrinivasKuranoKey}
With notations as above, we have 
$$\overline{G_0(R)}_\mathbb{Q} \cong \coker\big(\overline{K_0(X)}_\mathbb{Q}\xrightarrow{\cdot h}\overline{K_0(X)}_\mathbb{Q}\big)$$ 
if and only if 
$$\overline{K_0^{\m}(R)}_\mathbb{Q} \cong \ker\big(\overline{K_0(X)}_\mathbb{Q}\xrightarrow{\cdot h}\overline{K_0(X)}_\mathbb{Q}\big).$$
\end{proposition}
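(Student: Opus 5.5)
We will deduce the equivalence from linear algebra on the diagram (\ref{eqn: diagram mod num equiv}). Write $V:=\overline{K_0(X)}_\mathbb{Q}$ with its perfect pairing $\langle-,-\rangle_X$, and $T:=\cdot h\colon V\to V$. The key input is that $T$ is self-adjoint for this pairing: $\langle h\mathcal{G},\mathcal{F}\rangle_X=\chi(X,h\otimes\mathcal{G}\otimes\mathcal{F})=\langle\mathcal{G},h\mathcal{F}\rangle_X$, since the pairing is induced by tensor product and $K_0(X)$ is a commutative ring. We also need that $T$ descends to numerical equivalence classes, which follows from the same identity: if $\mathcal{G}$ is numerically trivial then so is $h\mathcal{G}$. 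Consequently, for the perfect pairing on the finite-dimensional space $V$ (Kurano's theorem), $\ker(T)=\Image(T)^{\perp}$. This gives a perfect pairing between $\coker(T)=V/\Image(T)$ and $\ker(T)$, so $\dim\coker(T)=\dim\ker(T)$.

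Next we relate the maps in (\ref{eqn: diagram mod num equiv}) to $T$. Since $\overline{\alpha}$ is surjective, $\overline{G_0(R)}_\mathbb{Q}$ is a quotient of $V$. By (\ref{eqn: relating G_0 with affine cone}), $\alpha\circ T=0$ already before passing to numerical equivalence, so $\overline{\alpha}$ factors through a surjection $\coker(T)\twoheadrightarrow\overline{G_0(R)}_\mathbb{Q}$. Dually, $\overline{\beta}$ is injective and its image lies in $\ker(T)$. To see this, for $\mathcal{F}\in K_0^\m(R)$ and any $\mathcal{G}$ we compute $\langle\mathcal{G},h\beta(\mathcal{F})\rangle_X=\langle h\mathcal{G},\beta(\mathcal{F})\rangle_X=\langle\alpha(h\mathcal{G}),\mathcal{F}\rangle=0$, so $h\beta(\mathcal{F})$ is numerically trivial. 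Hence $\overline{\beta}\colon\overline{K_0^\m(R)}_\mathbb{Q}\hookrightarrow\ker(T)$.

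Under the perfect pairings, the surjection $\coker(T)\twoheadrightarrow\overline{G_0(R)}_\mathbb{Q}$ and the injection $\overline{K_0^\m(R)}_\mathbb{Q}\hookrightarrow\ker(T)$ are mutually adjoint, by (\ref{eqn: compatibility via projection formula}). Indeed, the perfect pairing of $\overline{G_0(R)}_\mathbb{Q}$ with $\overline{K_0^\m(R)}_\mathbb{Q}$ shows that these two spaces have equal dimension $d$. Then the first map is an isomorphism iff $d=\dim\coker(T)$, and the second is an isomorphism iff $d=\dim\ker(T)$. Since $\dim\coker(T)=\dim\ker(T)$, the two conditions are equivalent. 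We interpret the isomorphisms in the statement as being induced by $\overline{\alpha}$ and $\overline{\beta}$; for abstract isomorphisms, the statement is simply an equality of dimensions, and the same argument applies.

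The main obstacle is making sure $T$ is well defined and self-adjoint on numerical classes, and that Kurano's finite-dimensionality and perfectness hold for both pairings. Self-adjointness is the step to verify first, via the commutativity of $\otimes$ on $K_0(X)$ with $X$ nonsingular; everything after that is dimension counting.
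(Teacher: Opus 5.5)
Your proof is correct. Its overall shape matches the paper's: factor $\overline{\alpha}$ through $\coker(h)$ using (\ref{eqn: relating G_0 with affine cone}), factor $\overline{\beta}$ through $\ker(h)$, then count dimensions using the two perfect pairings.

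\textbf{Where you differ.} The genuine difference is how you show that $\overline{\beta}$ lands in $\ker(h)$.
\begin{itemize}
\item The paper invokes the Thomason--Trobaugh localization sequence. This gives exactness of $K_0^{\m}(R)_\mathbb{Q}\xrightarrow{\beta}K_0(X)_\mathbb{Q}\xrightarrow{h}K_0(X)_\mathbb{Q}$.
\item You argue by duality instead. You combine the compatibility (\ref{eqn: compatibility via projection formula}), the vanishing $\alpha\circ h=0$ coming from (\ref{eqn: relating G_0 with affine cone}), and the self-adjointness of $\cdot h$ for the symmetric pairing $\chi(X,\mathcal{G}\otimes\mathcal{F})$. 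This shows only that $h\beta(\mathcal{F})$ is numerically trivial. That is all the proposition needs, and it avoids higher $K$-theory entirely.
\end{itemize}

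\textbf{What each route buys.} The paper's route gives the stronger statement $\Image(\beta)=\ker(h)$ on $K_0(X)_\mathbb{Q}$, before passing to numerical equivalence. This is exactly what Corollary~\ref{cor: CH equals CH mod num equiv} uses to get surjectivity of $\overline{\beta}$ onto $\ker(h)$. Your duality argument would not supply that, so it is self-contained for the proposition but would not carry over to the corollary.

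\textbf{A simplification.} The equality $\dim\coker(h)=\dim\ker(h)$ needs no self-adjointness: $h$ is an endomorphism of a finite-dimensional space, so this is just rank--nullity. Your perfect pairing between $\coker(h)$ and $\ker(h)$ is correct but more than required. Self-adjointness is genuinely used only in two places: to see that $\cdot h$ descends to numerical classes, and in your factorization of $\overline{\beta}$.
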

\begin{proof}
We will prove that $\overline{\alpha},\overline{\beta}$ factor through $\coker(h),\ker(h)$ respectively as follows:
\begin{equation}
\label{eqn: diagram mod num equiv factorization}
\xymatrix{
\overline{G_0(R)}_\mathbb{Q} \times \overline{K_0^{\m}(R)}_\mathbb{Q}  \ar[r]  \ar@{^{(}->}@<4ex>[d]^{b} &  \mathbb{Q} \\
\coker(h) \,\  \,\ \ker(h) \ar@{->>}@<4ex>[u]^{a}  \ar@{^{(}->}@<4ex>[d] &   \\
\overline{K_0(X)}_\mathbb{Q} \times \overline{K_0(X)}_\mathbb{Q} \ar@{->>}@<4ex>[u] \ar[r]  & \mathbb{Q}. \ar@{=}[uu]
}
\end{equation}
Here, the factorization of $\overline{\alpha}$ follows from (\ref{eqn: relating G_0 with affine cone}) after modulo numerical equivalence. For the factorization of $\overline{\beta}$, we invoke the following localization sequence of $K$-theory \cite{ThomasonTrobaughKtheory}:
\[
\xymatrix{
\cdots \ar[r] & K_1(Y\backslash X)_\mathbb{Q} \ar[r] & K_0^X(Y)_\mathbb{Q} \ar[r] & K_0(Y)_\mathbb{Q} \ar[r] & \cdots \\
\cdots \ar[r] & K_1(\Spec(R)\backslash\{\m\})_\mathbb{Q} \ar@{=}[u] \ar[r] & K_0^{\m}(R)_\mathbb{Q} \ar[r] \ar[u]^{\pi^*} & K_0(R)_\mathbb{Q} \ar[r] \ar[u] & \cdots.
}
\]
Since $K_0^{\m}(R)_\mathbb{Q} \to K_0(R)_\mathbb{Q}\cong\mathbb{Q}$ is the zero map, chasing the diagram we obtain an exact sequence
$$K_0^{\m}(R)_\mathbb{Q} \to K_0^X(Y)_\mathbb{Q} \to K_0(Y)_\mathbb{Q}.$$
Unwinding the definition of $\beta$, this sequence can be identified with 
\begin{equation}
\label{eqn: key factorization from K-theory}
K_0^{\m}(R)_\mathbb{Q} \xrightarrow{\beta} K_0(X)_\mathbb{Q} \xrightarrow{h} K_0(X)_\mathbb{Q}.
\end{equation}
Thus $\beta$ (and hence $\overline{\beta}$) factors through $\ker(h)$ as wanted. By counting dimensions over $\mathbb{Q}$ in (\ref{eqn: diagram mod num equiv factorization}) and using that both pairings are perfect pairings, it is straightforward to see that $a$ is bijective if and only if $b$ is bijective.
\end{proof}

\begin{corollary}
\label{cor: CH equals CH mod num equiv}
With notations as above, if $K_0(X)_\mathbb{Q}\cong \overline{K_0(X)}_\mathbb{Q}$, then $G_0(R)_\mathbb{Q}\cong\overline{G_0(R)}_\mathbb{Q}$. As a consequence, if $\emph{CH}_*(X)_\mathbb{Q}=\overline{\emph{CH}_*(X)}_\mathbb{Q}$, then $A_*(R)_\mathbb{Q}\cong \overline{A_*(R)}_\mathbb{Q}$ and $A_i(R)_\mathbb{Q}\cong \overline{A_i(R)}_\mathbb{Q}$ for all $i$.
\end{corollary}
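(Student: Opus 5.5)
We need to prove Corollary: if $K_0(X)_\mathbb{Q}\cong\overline{K_0(X)}_\mathbb{Q}$ (i.e. numerical equivalence is trivial on $K_0(X)_\mathbb{Q}$), then $G_0(R)_\mathbb{Q}\cong \overline{G_0(R)}_\mathbb{Q}$; and the Chow-group version follows.

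The plan is to compare the exact sequence (\ref{eqn: relating G_0 with affine cone}) with its analogue modulo numerical equivalence, and to use the perfect pairings of diagram (\ref{eqn: diagram mod num equiv}) to force the natural surjection $G_0(R)_\mathbb{Q}\to\overline{G_0(R)}_\mathbb{Q}$ to be injective.

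First, since $K_0(X)_\mathbb{Q}=\overline{K_0(X)}_\mathbb{Q}$, the sequence (\ref{eqn: relating G_0 with affine cone}) identifies $G_0(R)_\mathbb{Q}$ with $\coker(h)$ computed on $\overline{K_0(X)}_\mathbb{Q}$. In particular $G_0(R)_\mathbb{Q}$ is finite dimensional, and the surjection $\overline{\alpha}$ factors as
\[
\coker(h)\cong G_0(R)_\mathbb{Q}\twoheadrightarrow\overline{G_0(R)}_\mathbb{Q}.
\]
It therefore suffices to show that the map $a\colon\coker(h)\to\overline{G_0(R)}_\mathbb{Q}$ in (\ref{eqn: diagram mod num equiv factorization}) is bijective. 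By Proposition~\ref{prop: RobertSrinivasKuranoKey}, this is equivalent to $b\colon\overline{K_0^\m(R)}_\mathbb{Q}\to\ker(h)$ being bijective. Since $b$ is already injective, the task reduces to showing it is surjective.

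For surjectivity, I would use the exact sequence (\ref{eqn: key factorization from K-theory}), which extends to
\[
K_0^\m(R)_\mathbb{Q}\xrightarrow{\beta}K_0(X)_\mathbb{Q}\xrightarrow{h}K_0(X)_\mathbb{Q}.
\]
Exactness in the middle says that $\beta$ surjects onto $\ker(h)\subseteq K_0(X)_\mathbb{Q}$. Because $K_0(X)_\mathbb{Q}=\overline{K_0(X)}_\mathbb{Q}$, this kernel is the same as the kernel of $h$ on $\overline{K_0(X)}_\mathbb{Q}$. Hence $\overline{\beta}$, and thus $b$, is surjective. This exactness is the key input: it comes from the localization diagram, where $K_0^X(Y)\to K_0(Y)$ is identified with multiplication by $h=[\sO_X]-[\sO_X(-1)]$ (the class of the zero section, via the Koszul resolution $\sO_Y(-X)\to\sO_Y$). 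Hence $b$ is bijective, so $a$ is bijective, and $G_0(R)_\mathbb{Q}\cong\overline{G_0(R)}_\mathbb{Q}$.

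For the Chow statement, I would pass through the Riemann--Roch isomorphisms $\tau\colon G_0(-)_\mathbb{Q}\cong \mathrm{CH}_*(-)_\mathbb{Q}$ for $X$ and $A_*(R)_\mathbb{Q}$. These are compatible with numerical equivalence, as in Kurano's framework, where numerical equivalence on both sides is defined by pairing against $K_0^\m(R)$ through local Chern characters. Then $\mathrm{CH}_*(X)_\mathbb{Q}=\overline{\mathrm{CH}_*(X)}_\mathbb{Q}$ gives $K_0(X)_\mathbb{Q}=\overline{K_0(X)}_\mathbb{Q}$, which yields the $G_0$ statement and hence $A_*(R)_\mathbb{Q}=\overline{A_*(R)}_\mathbb{Q}$. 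The statement for each individual $A_i$ follows because $\tau$ and numerical equivalence both respect the grading by dimension, numerical equivalence on $A_i$ being tested against the $i$-th local Chern character component.

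The main obstacle is this last bookkeeping: checking that the Riemann--Roch map carries numerical equivalence in $K$-theory to numerical equivalence on Chow groups, degree by degree. This should be quoted from \cite{KuranoNumericalequivalenceonChowgroupsoflocalrings} rather than reproved.
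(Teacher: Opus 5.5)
Your proof is correct and is essentially the paper's argument. You get surjectivity, hence bijectivity, of $b$ from the exact sequence (\ref{eqn: key factorization from K-theory}) together with $K_0(X)_\mathbb{Q}=\overline{K_0(X)}_\mathbb{Q}$, transfer this to bijectivity of $a$ via Proposition~\ref{prop: RobertSrinivasKuranoKey}, and use (\ref{eqn: relating G_0 with affine cone}) to identify $G_0(R)_\mathbb{Q}$ with $\coker(h)$; the Chow statement is handled, as in the paper, by the compatibility of Riemann--Roch with numerical equivalence and the graded statement quoted from Kurano, with only the order of presentation differing.
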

\begin{proof}
We first observe that $\overline{K_0^{\m}(R)}_\mathbb{Q} \cong \ker\big(\overline{K_0(X)}_\mathbb{Q}\xrightarrow{\cdot h}\overline{K_0(X)}_\mathbb{Q}\big)$: injectivity always holds as in (\ref{eqn: diagram mod num equiv factorization}) and surjectivity follows from (\ref{eqn: key factorization from K-theory}) and the assumption that $K_0(X)_\mathbb{Q}\cong \overline{K_0(X)}_\mathbb{Q}$. Proposition~\ref{prop: RobertSrinivasKuranoKey} implies that $\overline{G_0(R)}_\mathbb{Q} \cong \coker\big(\overline{K_0(X)}_\mathbb{Q}\xrightarrow{\cdot h}\overline{K_0(X)}_\mathbb{Q}\big)$. The first statement follows from (\ref{eqn: relating G_0 with affine cone}) by using the assumption $K_0(X)_\mathbb{Q}\cong \overline{K_0(X)}_\mathbb{Q}$ again.

The second statement follows from the first by invoking the (singular) Riemann--Roch to identify $K_0(X)_\mathbb{Q}$ with $\text{CH}_*(X)_\mathbb{Q}$ and $G_0(R)_\mathbb{Q}$ with $A_*(R)_\mathbb{Q}$, see \cite[Corollary 18.3.2]{FultonIntersectionTheoryOriginal} (these identifications are compatible with numerical equivalence, see \cite[Section 2]{KuranoNumericalequivalenceonChowgroupsoflocalrings}). The last assertion follows from \cite[Proposition 2.4]{KuranoNumericalequivalenceonChowgroupsoflocalrings}.
\end{proof}

We can now explain the existence of modules of finite length and finite projective dimension with negative intersection multiplicity over the ring $k[x_1,x_2, x_3, x_4]/(x_1x_4-x_2x_3)$. In fact, one can construct many examples as in \cite[Examples 7.8 and 7.9]{KuranoNumericalequivalenceonChowgroupsoflocalrings}.

\begin{corollary}[{\cite{DuttaHochsterMcLaughlin}}]
Suppose $R=k[x_1,x_2, x_3, x_4]/(x_1x_4-x_2x_3)$ and $P=(x_1,x_2)$. Then there exists an $R$-module $M$ of finite length and finite projective dimension with $\chi(M,R/P)< 0$.
\end{corollary}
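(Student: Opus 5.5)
The plan is to apply the machinery of this appendix to the cone over $X=\Proj(R)\cong\mathbb{P}^1\times\mathbb{P}^1$, which is nonsingular, with $\sO_X(1)=\sO(1,1)$ via the Segre embedding. First, $\text{CH}_*(X)_\mathbb{Q}$ agrees with its quotient by numerical equivalence: it is spanned by $[X]$, the two rulings $\ell_1,\ell_2$, and a point, and intersection numbers separate these classes. Hence Corollary~\ref{cor: CH equals CH mod num equiv} and its proof give $K_0^{\m}(R)_\mathbb{Q}$ modulo numerical equivalence $\cong\ker(h)$ and $\overline{G_0(R)}_\mathbb{Q}\cong\coker(h)$ on $\overline{K_0(X)}_\mathbb{Q}\cong\mathbb{Q}^4$. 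In Chow terms, $h$ acts as the Chern character of $1-\sO(-1,-1)$, i.e. multiplication by $c_1=\ell_1+\ell_2$ plus higher-order terms. A filtration argument shows $\ker(h)$ and $\coker(h)$ are both $2$-dimensional. The cokernel is spanned by the images of $[\sO_X]$ and $[\sO_{\ell_1}]$, which correspond to $[R]$ and $[R/P]$ (or $[R/(x_1,x_3)]$). The kernel consists of classes whose Chern character lies in $\mathbb{Q}(\ell_1-\ell_2)\oplus\mathbb{Q}[\mathrm{pt}]$.

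Next I would evaluate the pairing $\langle\alpha(\mathcal{G}),\beta(\mathcal{F})\rangle$ through the bottom row, using \eqref{eqn: compatibility via projection formula}, and apply Riemann--Roch on $X$. Take a kernel element $\xi$ with $\mathrm{ch}(\xi)=\ell_1-\ell_2+c\,[\mathrm{pt}]$; the value of $c$ is fixed by the requirement $h\cdot\xi=0$. Pairing $\xi$ with $\sO_{\ell_1}$ gives $\chi(\xi\otimes\sO_{\ell_1})=\deg((\ell_1-\ell_2)\cdot\ell_1)=-1$. Pairing $\xi$ with $\sO_{\ell_2}$ gives $+1$, and pairing with $\sO_X$ gives $0$. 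So there is a class $\xi\in\ker(h)$ whose pairing with $\alpha(\sO_{\ell_1})=[R/P]$ is negative.

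It remains to realize $\xi$, or a positive multiple of it, by an actual module. Surjectivity of $\beta$ onto $\ker(h)$ over $\mathbb{Q}$, which follows from \eqref{eqn: key factorization from K-theory}, provides some $\mathcal{F}\in K_0^{\m}(R)$ with $\beta(\mathcal{F})=N\xi$ for some $N>0$. This $\mathcal{F}$ is a class of a perfect complex with support at $\m$. The pairing with $[R/P]$ then gives $\chi(\mathcal{F},R/P)<0$, and this is compatible with $\chi(\mathcal{F},R)=0$.

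\textbf{Main obstacle.} The hardest step is passing from such a complex to a single module of finite length and finite projective dimension. I would use the standard fact, due to Roberts--Srinivas, that every class in $K_0^{\m}(R)$ of a Cohen--Macaulay ring is, up to adding classes $[R/(\text{s.o.p.})]$ whose pairings behave well, the class of a module of finite length and finite projective dimension. Failing that, I would make an explicit choice: $M=\Ext$-dual, in the sense of the earlier local duality discussion, of $H^0$ of the pullback of a suitable twist, as in Kurano's Examples 7.8--7.9. A Koszul class $[R/(\underline{x})]$ for a system of parameters pairs with $[R/P]$ to give $e(R/P)>0$ and lies in the image of $\beta$ as a multiple of the point class. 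Adding such classes therefore only shifts $c$, and I would arrange the realization so that the negative term from $\ell_1-\ell_2$ persists.
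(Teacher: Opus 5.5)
Your first two steps are correct, and more explicit than the paper. In $\mathrm{ch}$-coordinates, $h$ is multiplication by $\ell_1+\ell_2-[\mathrm{pt}]$, and its kernel is exactly $\mathbb{Q}(\ell_1-\ell_2)\oplus\mathbb{Q}[\mathrm{pt}]$. Exactness of \eqref{eqn: key factorization from K-theory} then gives a perfect complex $\mathcal{F}$ supported at $\m$ with $\beta(\mathcal{F})=N\xi$. Riemann--Roch gives $\chi(\mathcal{F},R/P)=-N$, once the sign of $\xi$ is matched to the ruling whose cone is $V(P)$. There are two slips. First, $[\mathrm{pt}]\in\ker(h)$, so $c$ is not fixed by $h\cdot\xi=0$. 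Second, $\xi$ pairs with $\sO_X$ to $c$, not $0$. The paper needs less: it reads off $[R/P]\neq 0$ in $\overline{G_0(R)}_\mathbb{Q}$ from $\mathrm{Cl}(R)_\mathbb{Q}\neq 0$ via Corollary~\ref{cor: CH equals CH mod num equiv}. That yields some $\mathcal{F}$ with $\chi(\mathcal{F},R/P)\neq 0$, with no sign information.

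The gap is in the step you flag as the main obstacle: carrying the sign through to an actual module.

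\begin{itemize}
\item You claim a Koszul class $[R/(\underline{x})]$ pairs with $[R/P]$ to $e(R/P)>0$. This is false. Since $\dim R/P=2<3$, Serre's theorem on Koszul Euler characteristics gives $\chi(\Kos(\underline{x};R)\otimes_R R/P)=0$. This agrees with your other claim that $\beta$ of this class is a multiple of $[\mathrm{pt}]$, because $[\mathrm{pt}]\cdot\ell_1=0$.
\item Taken at face value, your claim would mean each Koszul correction raises $\chi(-,R/P)$ and could cancel the $-N$. ``I would arrange the realization so that the negative term persists'' gives no mechanism to prevent this, and the Kurano fallback is only a pointer.
\item The form of Roberts--Srinivas you invoke (a single module, up to Koszul classes) is stronger than what the paper cites. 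The paper uses only that $K_0^{\m}(R)$ is generated by classes of modules of finite length and finite projective dimension. From generation alone, $[\mathcal{F}]=\sum n_i[M_i]$ with $n_i\in\mathbb{Z}$ gives some $M_i$ with $\chi(M_i,R/P)\neq 0$, but of unknown sign.
\end{itemize}

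The missing idea is the paper's sign flip. Let $Q=(x_1,x_3)$. Then $[R/x_1R]=[R/P]+[R/Q]$ in $G_0(R)$. Also $\chi(M,R/x_1R)=0$ because $x_1$ is a nonzerodivisor, so $\chi(M,R/Q)=-\chi(M,R/P)$. Twisting $M$ by the automorphism $x_2\leftrightarrow x_3$, which swaps $P$ and $Q$, therefore gives a negative value; replacing $M$ by $M^\vee$ also works.

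Your argument is complete once you add this flip. Alternatively, if you really have the stronger realization statement, use the corrected fact that Koszul classes pair to zero with $R/P$.
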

\begin{proof}
In this case $X=\Proj(R)=\mathbb{P}^1\times \mathbb{P}^1$, so $\text{CH}_*(X)_\mathbb{Q}=\overline{\text{CH}_*(X)}_\mathbb{Q}=\mathbb{Q}^2\oplus \mathbb{Q}^2$. Corollary~\ref{cor: CH equals CH mod num equiv} then implies that 
$$\overline{A_2(R)}_\mathbb{Q}\cong A_2(R)_\mathbb{Q} \cong \text{Cl}(R)_\mathbb{Q} \neq 0.$$ 
In particular, the image of the class $[R/P]$ (which is a generator of the class group) is nonzero in $\overline{A_2(R)}_\mathbb{Q}$ and thus nonzero in $\overline{G_0(R)}_\mathbb{Q}$. Unwinding the definitions, this implies that there exists a perfect complex $\mathcal{F}$ supported at $\{\m\}=(x_1,x_2, x_3, x_4)$ such that $\chi(\mathcal{F}, R/P)\neq 0$. On the other hand, the Grothendieck group of perfect complexes supported at $\{\m\}$ is generated by modules of finite length and finite projective dimension by \cite[Proposition 2]{RobertsSrinivasModulesoffinitelengthandfiniteprojectivedimension}. Thus there exists such an $M$ with $\chi(M,R/P)\neq 0$. Replacing $[R/P]$ with $[R/Q]$ (where $Q=(x_1,x_3)$) if necessary and using symmetry (or alternatively, replacing $M$ by $M^\vee$), we can arrange that $\chi(M, R/P)<0$. 
\end{proof}

\begin{remark}
The counterexamples in Section 2 crucially rely on the example in \cite{DuttaHochsterMcLaughlin}. Although the negativity of intersection multiplicity is necessary for the construction, it seems also important that the relevant $\Tor$ modules are actually annihilated by the maximal ideal (see Section~\ref{sec: lifting lemma}). We do not yet have a theoretical (or less computational) explanation why this happens in the Dutta--Hochster--McLaughlin's example.
\end{remark}

\newpage
\section{Some positive results}
\label{app: positive results}
\counterwithin{equation}{section}
\counterwithin{theorem}{section}
\setcounter{theorem}{0}
\setcounter{equation}{0}
In this appendix, we collect some partial positive results towards Peskine--Szpiro's Conjecture~\ref{conj: Peskine--Szpiro}. These results are well-known to experts and are essentially contained in \cite[Chapter 6]{RobertsBook}. We first recall some backgound. For an arbitrary finitely generated module $M$ over a Noetherian local ring $(R,\m)$, we always have the following inequalities:
\begin{equation}
\label{eqn: inequalities grade codim pd}
\grade(M) \leq \codim(M) \leq \dim(R)-\dim(M) \leq \pd(M).
\end{equation}
Here $\grade(M) := \depth_{\Ann_R(M)}R$; $\codim(M) := \height(\Ann_R(M))$; the first two inequalities in (\ref{eqn: inequalities grade codim pd}) are obvious and the last inequality follows from the new intersection theorem \cite{RobertsIntersectionTheorem}, which says that if a non-exact finite free complex $F_\bullet$ is supported only at $\{\m\}$ (i.e., $F_\bullet$ has finite length cohomologies), then $\text{length}(F_\bullet)\geq \dim(R)$. 

The module $M$ is called perfect if all the inequalities in (\ref{eqn: inequalities grade codim pd}) are equalities. Peskine--Szpiro's intersection theorem (which is a consequence of the new intersection theorem in \cite{RobertsIntersectionTheorem}) says that, under the same notations as in Conjecture~\ref{conj: Peskine--Szpiro}, we have 
$$\dim(N) \leq \pd(M).$$
In particular, the intersection theorem implies that the strong intersection conjecture (and thus the dimension inequality and the grade conjecture) hold when $M$ is perfect.

It was pointed out by Roberts in \cite[Page 117]{RobertsBook} that the grade conjecture follows from generalized vanishing of Serre intersection multiplicity (i.e., under the same notations as in Conjecture~\ref{conj: Peskine--Szpiro}, we have $\chi(M,N)=0$ if $\dim(M)+\dim(N)<\dim(R)$),\footnote{Note that, however, this generalized vanishing is false even over hypersurfaces by \cite{DuttaHochsterMcLaughlin}.} who attributed this result to Peskine--Szpiro \cite{PeskineSzpiroSyzygiesMultiplicities}. Both \cite{RobertsBook} and \cite{PeskineSzpiroSyzygiesMultiplicities} only provide full details of this result in the $\mathbb{N}$-graded setting. Here we include a complete argument, closely following \cite[Theorem 6.3.4]{RobertsBook}. Henceforth, we will repeatedly use the fact that if $R$ admits a module $M$ of finite length and finite projective dimension, then $R$ is Cohen--Macaulay (this is a straightforward consequence of the new intersection theorem).

\begin{theorem}
\label{thm: vanishing implies grade}
Let $R$ be a Noetherian complete local ring and let $M$ be a finitely generated $R$-module of finite projective dimension. Suppose for every ideal $J\subseteq R$ such that $M \otimes_R R/J$ has finite length and $\dim(M)+ \dim(R/J) <\dim(R)$, we have $\chi(M, R/J)=0$. Then $$\grade(M) = \dim(R) - \dim(M).$$
\end{theorem}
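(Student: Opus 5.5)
The plan is to argue by contradiction. Set $d=\dim(R)$, $t=\dim(M)$, $I=\Ann_R(M)$ and $g=\grade(M)$. By (\ref{eqn: inequalities grade codim pd}) we always have $g\le d-t$, so suppose $g<d-t$. From this assumption I will produce an ideal $J$ such that $M\otimes_R R/J$ has finite length, $\dim(R/J)<d-t$, and $\chi(M,R/J)\neq 0$. That contradicts the hypothesis.

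\textbf{Step 1 (a perfect complex supported at the closed point).} Let $F_\bullet$ be a finite free resolution of $M$. Choose $x_1,\dots,x_t\in R$ forming a system of parameters on $M$, and put $P=F_\bullet\otimes_R K_\bullet(x_1,\dots,x_t)$. This is a perfect complex with finite length homology. For any $J\supseteq(x_1,\dots,x_t)$ we have $\chi(P,R/J)=\chi(M,R/(x)R\otimes^{\mathbb L}R/J)$. Rigidity-free bookkeeping, namely the filtration of $K_\bullet\otimes R/J$ by its homology, rewrites this as an alternating sum of terms $\chi(M,H_i(x;R/J))$. Each $H_i(x;R/J)$ has a filtration by cyclic modules $R/J'$ with $J'\supseteq J$. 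Hence, if every $R/J\supseteq R/(x)$ with $\dim(R/J)<d-t$ has $\chi(M,R/J)=0$, then $\chi(P,N)=0$ for every finitely generated $N$ with $\dim N<d-t$.

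\textbf{Step 2 (locating the failure of grade).} Since $g=\min\{i:\Ext^i_R(M,R)\neq 0\}$, choose a prime $\mathfrak p$ minimal in $\Supp\Ext^g_R(M,R)$. Then $\mathfrak p\supseteq I$, $\height\mathfrak p\ge g$, and in $R_{\mathfrak p}$ the dual complex $F^*_{\mathfrak p}$ has vanishing cohomology below degree $g$ and a nonzero finite length cohomology in degree $g$. Applying the new intersection theorem to the truncation of $F^*_{\mathfrak p}$ should force $\height\mathfrak p=g$. Because $g<d-t$ and $R$ is complete, hence catenary, this gives $\dim(R/\mathfrak p)\ge d-g>t$ once one checks that $\mathfrak p$ lies in a component of $\Spec R$ of dimension $d$.

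Next take $J=\mathfrak p+(x_1,\dots,x_t)$ suitably enlarged. The goal is that $R/J$ has dimension exactly $d-t-1$ or less, while its Euler pairing with $P$ computes, up to sign, the length of a localized Ext. Concretely, using local duality over the complete ring, with a dualizing complex, one turns $\chi(P,R/J)$ into $\chi(P^*,\omega_{R/J})$. The lowest nonvanishing cohomology of $P^*$ then contributes a term that cannot be cancelled.

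\textbf{Main obstacle.} The difficulty is Step 2's nonvanishing: showing that the chosen $J$ gives $\chi(M,R/J)\neq 0$ rather than merely a possibly cancelling alternating sum. My first attempt would be to induct on $d-t-g$. I would cut $R/\mathfrak p$ down by general parameters to reach dimension $d-t-1$. In the extreme degree $g$ the dual complex $P^*$ localized at $\mathfrak p$ contributes $\pm e\cdot\ell(\Ext^g_{R_{\mathfrak p}}(M_{\mathfrak p},R_{\mathfrak p}))\neq0$, where $e$ is a multiplicity. I would then argue via dimension counting, following \cite[Theorem 6.3.4]{RobertsBook}, that all other contributions vanish by the hypothesis applied to modules of smaller dimension.
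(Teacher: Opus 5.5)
\noindent There is a genuine gap. The one step that uses the vanishing hypothesis is never carried out: you need an ideal $J$ with $M\otimes_R R/J$ of finite length, $\dim(M)+\dim(R/J)<\dim(R)$ and $\chi(M,R/J)\neq 0$. The candidate you propose cannot work. If $J\supseteq\mathfrak p+(x_1,\dots,x_t)$, then $V(J)\subseteq V(\mathfrak p)\subseteq\Supp(M)$, so $R/J$ has finite length. Hence $\chi(M,R/J)=\ell(R/J)\sum_i(-1)^i\beta_i(M)=0$, because $\Ann_R(M)$ contains a nonzerodivisor. (Under $g<d-t$ one has $g\ge 1$: if $g=0$, Auslander's zerodivisor theorem gives $\Ann_R(M)=0$, hence $t=d$.) Likewise $\chi(P,R/J)=0$. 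No enlargement of $\mathfrak p+(x)$ and no rewriting through local duality can give a nonzero number. The test module has to meet $\Supp(M)$ properly rather than sit inside it.

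Step~2 is also aimed at the wrong case. Because $\mathfrak p\in\Supp\Ext^g_R(M,R)\subseteq\Supp(M)$, you have $\dim(R/\mathfrak p)\le t$. So if $\mathfrak p$ contained a minimal prime $\mathfrak q$ with $\dim(R/\mathfrak q)=d$, your inequality $\dim(R/\mathfrak p)\ge d-g>t$ would already be a contradiction, with no multiplicities involved. This covers the equidimensional case, which the paper dispatches by citing Roberts. Thus ``$\mathfrak p$ lies in a component of dimension $d$'' cannot be checked in the only case with content. That case is when $R$ is not equidimensional and the small-height primes of $\Supp(M)$ lie only on lower-dimensional components, which is exactly the configuration of $M'$ in Section~\ref{sec: counterexample grade conjecture}.

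A smaller point: $\height\mathfrak p=g$ is true, but not via the new intersection theorem applied to a truncation of $F^*_{\mathfrak p}$, whose $H_0$ need not have finite length. Instead, Auslander--Buchsbaum applied to that truncation gives $\depth R_{\mathfrak p}\le g$, hence $\depth R_{\mathfrak p}=g$. So $M_{\mathfrak p}$ is perfect, and $\Supp M_{\mathfrak p}=\Supp\Ext^g_{R_{\mathfrak p}}(M_{\mathfrak p},R_{\mathfrak p})$ is the closed point. This is essentially the lemma of Beder that the paper uses to replace grade by codimension.

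The missing idea is the right test module. The paper proceeds as follows.
\begin{enumerate}
\item[(a)] Take the minimal primes $Q_1,\dots,Q_s$ of $\Supp(M)$ with $\height(Q_i)<d-t$ (your $\mathfrak p$ is one of them). Set $W=R\smallsetminus\bigcup_i Q_i$ and $\mathfrak a=\ker(R\to W^{-1}R)$. Then $V(\mathfrak a)$ is the union of the components of $\Spec R$ through the $Q_i$, all of dimension $<d$.
\item[(b)] Two dimension counts follow. First, $\dim(R/\mathfrak a)-\dim(M/\mathfrak aM)<d-t$. Second, by the new intersection theorem, every minimal prime of $\Supp(M/\mathfrak aM)$ of maximal dimension $c:=\dim(M/\mathfrak aM)$ is one of the $Q_i$.
\item[(c)] As $\mathfrak aR_{Q_i}=0$, the modules $\Tor^R_{>0}(M,R/\mathfrak a)$ vanish at every $Q_i$, so they have dimension $<c$.
\item[(d)] Take a system of parameters $x_1,\dots,x_c$ of $M/\mathfrak aM$ that is part of one for $R/\mathfrak a$. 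By (c), $\chi\bigl(M\otimes^{\mathbb L}_R R/\mathfrak a\otimes^{\mathbb L}_R\Kos(x;R)\bigr)=e((x),M/\mathfrak aM)>0$.
\item[(e)] The hypothesis, applied to cyclic filtration pieces of $H_i(x;R/\mathfrak a)$, forces the same number to be $0$. These pieces have dimension at most $\dim(R/\mathfrak a)-c<d-t$, so the hypothesis applies.
\end{enumerate}

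Your Step~1 filtration bookkeeping is the right tool for (e). However, it must be run with parameters of $M/\mathfrak aM$ on $R/\mathfrak a$, not with parameters of $M$. Note that $\dim(R/\mathfrak a)$ itself may be $\ge d-t$, so your reduction ``$\chi(P,N)=0$ whenever $\dim N<d-t$'' does not apply directly to $N=R/\mathfrak a$.
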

\begin{proof}
Since $M$ has finite projective dimension, by \cite[Lemma 3.5]{BederTheGradeConjecture} (which essentially also comes from the new intersection theorem), we know that $\grade(M)=\codim(M)$. Hence it is enough to show that 
$$\codim(M) = \dim(R) - \dim(M).$$
Without loss of generality, we may assume that $R$ is not equidimensional (as otherwise the grade conjecture holds by \cite[Proposition 6.3.3]{RobertsBook}). Let $\mathfrak{q}_1,\dots,\mathfrak{q}_t$ be the minimal primes of $R$ so that $\dim(R/\mathfrak{q}_i)<\dim(R)$.

Let $Q_1,\dots,Q_s$ be the minimal primes in $\Supp(M)$ such that $\height(Q_i)<\dim(R)-\dim(M)$. If no such $Q$'s exist (i.e., $s=0$), then $\codim(M)\geq \dim(R) -\dim(M)$ and thus we must have equality (as the other inequality is obvious). Thus without loss of generality, we may assume $s\geq 1$. Next, we note that each minimal prime of $R_{Q_i}$ is of the form $\q_j(R_{Q_i})$ for some $j$, since if $Q_i$ contains a minimal prime $\q$ with $\dim(R/\q)=\dim(R)$, then $$\height(Q_i)=\dim(R)-\dim(R/Q_i)\geq \dim(R)-\dim(R/I)$$ which contradicts our choice of $Q_i$.

Let $\fra$ be any ideal of $R$ whose minimal primes are precisely those $\q_1,\dots,\q_t$ that are contained in $Q_i$ for some $i$. We make two observations.

\begin{claim}
\label{clm: dimension inequality}
$\dim(R/\fra) -\dim(M/\fra M) <\dim(R)- \dim(M)$.
\end{claim}
\begin{proof}[Proof of Claim]
After re-arranging the $Q_i$'s and $\q_j$'s, we may assume that $\dim(R/\fra) =\dim(R/\q_1)$ and that $\q_1$ is contained in $Q_1$. Since $Q_1\in\Supp(M/\fra M)$ by our assumption, we have 
\[
  \dim(R/\fra) -\dim(M/\fra M)  \leq \dim(R/\q_1) - \dim(R/Q_1) = \height(Q_1) < \dim(R)-\dim(M).
\]
where for the equality above, we use the fact that $R_{Q_i}$ is Cohen--Macaulay, in particular equidimensional, for each $1\leq i\leq s$ (since $M_{Q_i}$ is a module of finite length and finite projective dimension over $R_{Q_i}$) and that $\q_1$ is a minimal prime that is contained in $Q_1$ by our choice.
\end{proof}

\begin{claim}
\label{clm: minimal primes in support}
If $Q$ is a minimal prime in $\Supp(M/\fra M)$ such that $\dim(R/Q)=\dim(M/\fra M)$, then $Q=Q_i$ for some $i$.
\end{claim}
\begin{proof}[Proof of Claim]
Suppose on the contrary that $Q\neq Q_i$ for any $i$. Let $F_\bullet$ be a minimal free resolution of $M_Q$ over $R_Q$ and let $G_\bullet = F_\bullet \otimes_{R_Q} R_Q/\fra R_Q$. Then we know that $\Supp(G_\bullet)=\{QR_Q\}$ and thus by the new intersection theorem, $\text{length}(G_\bullet)\geq \dim(R_Q/\fra R_Q)$. On the other hand, we have that 
$$\depth(R_Q) \geq \pd(M_Q) = \text{legnth}(F_\bullet)=\text{legnth}(G_\bullet) \geq \dim(R_Q/\fra R_Q) \geq \depth(R_Q)$$
where the last inequality follows from the fact that $\fra$ is contained in some minimal prime of $R_Q$. It follows that we must have equalities throughout and in particular, we have 
$$\dim(R_Q/\fra R_Q)=\pd(M_Q).$$ Since $Q$ is a minimal prime in $\Supp(M/\fra M)$, $Q$ must contain a minimal prime $P$ in $\Supp(M)$. Since $Q\neq Q_i$ for any $i$, we must have $\height(P)\geq \dim(R)-\dim(M)$ by our assumption. Thus
\begin{align*}
\dim(R/\fra) & \geq \dim(R/Q) + \dim(R_Q/\fra R_Q) \\
& = \dim(M/\fra M) + \pd(M_Q) \\
& \geq \dim(M/ \fra M) + \pd(M_P)\\
& = \dim(M/ \fra M) + \height(P) \\
& \geq  \dim(M/\fra M) + \dim(R)-\dim(M)
\end{align*}
where for the second equality above we used the fact that $M_P$ is a module of finite length and finite projective dimension over $R_P$ and thus $R_P$ is Cohen--Macaulay (so $\pd(M_P)=\dim(R_P)=\height(P)$). But this contradicts Claim~\ref{clm: dimension inequality}.
\end{proof}

We now come back to the proof of the theorem. We let $W$ be the multiplicative set $R - \cup_{1\leq i\leq s}Q_i$ and set 
$$\fra := \ker(R\to W^{-1}R).$$
Since $\fra W^{-1}R = 0$ by construction, it follows that the minimal primes of $\fra$ are exactly those minimal primes of $R$ that are contained in some of the $Q_i$'s. Let $c:=\dim(M/\fra M)$ and choose $x_1,\dots,x_c$ a system of parameters on $M/\fra M$ that is also part of a system of parameters on $R$ and $R/\fra$. Consider 
$$\chi\big(M\otimes_R^\mathbb{L}R/\fra \otimes_R^\mathbb{L} \Kos(x_1,\dots,x_c; R)\big).$$
We will argue that the above quantity is both zero and nonzero and thus arrive at a contradiction. On the one hand, we have 
\begin{align*}
\dim(M) + \dim(R/(\fra, x_1,\dots,x_c)) & = \dim(M) + \dim(R/\fra) -c \\
& = \dim(M) + \dim(R/\fra) -\dim(M/\fra M)\\
& <\dim(R)
\end{align*}
where the last inequality follows from Claim~\ref{clm: dimension inequality}. Since each cohomology of $R/\fra \otimes_R^{\mathbb{L}}\Kos(x_1,\dots,x_c; R)$ is annihilated by $\fra+(x_1,\dots,x_c)$, by our assumption on the vanishing of $\chi$, we have that 
$$\chi\big(M\otimes_R^\mathbb{L}R/\fra \otimes_R^\mathbb{L} \Kos(x_1,\dots,x_c; R)\big)=0.$$
On the other hand, the zeroth cohomology of $M\otimes_R^\mathbb{L}R/\fra$ is $M/\fra M$, and by Claim~\ref{clm: minimal primes in support} we know that the minimal primes in $\Supp(M/\fra M)$ that has dimension $c$ are among $Q_1,\dots,Q_s$, and we have $\Tor_{>0}^R(M, R/\fra)_{Q_i}=0$ since $\fra R_{Q_i}=0$ by our construction. It follows that 
\begin{align*}
\chi\big(M\otimes_R^\mathbb{L}R/\fra \otimes_R^\mathbb{L} \Kos(x_1,\dots,x_c; R)\big) & =\chi\big(M/\fra M \otimes_R^\mathbb{L} \Kos(x_1,\dots,x_c; R)\big) \\
& = e((x_1,\dots,x_c), M/\fra M)\\
& \neq 0
\end{align*}
where the first equality uses the vanishing of $\chi(N \otimes_R^\mathbb{L} \Kos(x_1,\dots,x_c; R))$ when $\dim(N)<c$. Thus we have arrived at the desired contradiction.
\end{proof}

As a consequence, finitely generated modules of finite projective dimension that are liftable to regular local rings satisfies Conjecture~\ref{conj: Peskine--Szpiro}.

\begin{corollary}
\label{cor: liftable}
Let $R$ be a Noetherian local ring that admits a local map from a regular local ring $S$. Suppose there is a finitely generated $S$-module $L$ so that $M:= L \otimes^\mathbb{L}_SR$ is discrete (in particular $M$ is an $R$-module of finite projective dimension). Then all three statements of Conjecture~\ref{conj: Peskine--Szpiro} hold for $M$. 
\end{corollary}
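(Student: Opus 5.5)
The plan is to show that all three conditions in Conjecture~\ref{conj: Peskine--Szpiro} follow from two facts: the intersection-multiplicity properties of Serre over the regular ring $S$, and Theorem~\ref{thm: vanishing implies grade}. Before starting, I would reduce to the case where $R$ and $S$ are complete. Completion preserves dimensions, grade, the finite length of $M\otimes_R N$, and the hypothesis that $L\otimes^{\mathbb{L}}_S R$ is discrete. The completion $\widehat S$ is again regular, and $\widehat S\to\widehat R$ is local.

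The key tool is the following identity. Let $N$ be any finitely generated $R$-module with $M\otimes_R N$ of finite length. By associativity of derived tensor products,
\[
M\otimes^{\mathbb{L}}_R N \cong (L\otimes^{\mathbb{L}}_S R)\otimes^{\mathbb{L}}_R N \cong L\otimes^{\mathbb{L}}_S N,
\]
where $N$ is regarded as an $S$-module by restriction of scalars. Taking Euler characteristics gives $\chi^R(M,N)=\chi^S(L,N)$. Because $S\to R$ is local and $N$ is finite over $R$, $N$ is also a finite $S$-module when $R$ is module-finite over $S$. In general I would factor $S\to R$ through a Cohen factorization $S\to S'=S[[t_1,\dots,t_m]]\to R$ with $S'$ regular and $S'\to R$ surjective, and replace $L$ by $L\otimes_S S'$, which is flat over $S$. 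After this replacement $N$ is a finite $S'$-module. Since $\Supp(L)\cap\Supp(N)$ is then the closed point, Serre's results over the regular ring $S'$ apply. These are the dimension inequality, $\dim L+\dim N\le \dim S'$, and vanishing of $\chi$ when the inequality is strict, which holds by Gabber and Roberts.

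Next I would compare dimensions. On supports, $\Supp_R(M)=\Supp_R(L\otimes_{S'}R)$ is the preimage of $\Supp(L)$ in $\Spec R$, since $M\otimes k(\mathfrak p)\neq 0$ exactly when $L\otimes k(\mathfrak p)\ne0$. Hence $\dim M=\dim(L/IL)$ with $I=\ker(S'\to R)$. The discreteness of $L\otimes^{\mathbb{L}}_{S'}S'/I$, i.e.\ $\Tor_{>0}^{S'}(L,S'/I)=0$, together with Serre's theory (the codimension formula for Tor-independent modules over regular rings) gives $\dim(L/IL)=\dim L+\dim R-\dim S'$. Substituting into Serre's inequality $\dim L+\dim N\le\dim S'$ yields (i): $\dim M+\dim N\le \dim R$. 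Likewise, whenever $\dim M+\dim(R/J)<\dim R$, the inequality $\dim L+\dim(R/J)<\dim S'$ follows, so vanishing gives $\chi^R(M,R/J)=\chi^{S'}(L,R/J)=0$. Theorem~\ref{thm: vanishing implies grade} then gives (iii), and (ii) follows from (i) and (iii) by \cite[II Th\'eor\`eme (0.10)]{PeskineSzpiroIHES}.

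I expect the main obstacle to be the dimension formula $\dim M=\dim L+\dim R-\dim S'$. The inequality ``$\ge$'' is Serre's intersection inequality over a regular ring. For ``$\le$'' I would use Tor-independence. Localize at a minimal prime $\mathfrak p$ of $\Supp M$ and apply Serre's vanishing/positivity to $L_{\mathfrak p}$ and $R_{\mathfrak p}$: positivity combined with $\chi=\operatorname{length}(M_{\mathfrak p})>0$ forces equality $\dim L_{\mathfrak p}+\dim R_{\mathfrak p}=\dim S'_{\mathfrak p}$, and the dimension formula would then follow from catenarity of $S'$. If this route gets stuck, the fallback is to apply Theorem~\ref{thm: vanishing implies grade} only for (iii), and to derive (i) from (ii) via the intersection theorem, using that $\chi^{S'}$-positivity gives $\dim N\le \operatorname{codim}$.
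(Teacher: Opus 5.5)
Your proof is correct and follows the paper's argument. You pass to completions, use a Cohen factorization to get a surjection from a regular ring, identify $M\otimes^{\mathbb{L}}_R N\cong L\otimes^{\mathbb{L}}_{S'}N$, apply Serre's dimension inequality and the Roberts/Gillet--Soul\'e vanishing theorem, and then conclude (iii) from Theorem~\ref{thm: vanishing implies grade} and (ii) from (i) and (iii). The only difference is that the paper cites \cite[Proposition 1.6]{KCSotoLevinsLifting} for $\dim(L)-\dim(M)=\dim(S)-\dim(R)$, whereas you derive it, and your derivation works. In the ``$\le$'' step it is vanishing, not positivity, that does the job: a strict inequality would force $\chi(L_{\mathfrak p},R_{\mathfrak p})=\operatorname{length}(M_{\mathfrak p})>0$ to vanish. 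Positivity should not be invoked there, since it is open in ramified mixed characteristic.
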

\begin{proof}
It is enough to prove the dimension inequality and the grade conjecture for $M$. Replacing $R, S, M, L$ by their completions, we may assume $R$ and $S$ are complete local rings. By Cohen factorization \cite{AvramovFoxbyStructureLocalHomomorphisms}, we may assume that $S\to R$ is surjective. By \cite[Proposition 1.6]{KCSotoLevinsLifting}, we know that 
\begin{equation}
\label{eqn: dimension equality}
    \dim(L)-\dim(M) = \dim(S)-\dim(R).
\end{equation}

Now if $M\otimes_R N$ has finite length, then so is $L\otimes_SN$ and thus by Serre's dimension inequality over the regular local ring $S$ \cite{SerreLocalAlgebra}, we know that $\dim(L)+\dim(N)\leq \dim(S).$
This combined with (\ref{eqn: dimension equality}) shows that 
$$\dim(M) + \dim(N)\leq \dim(R).$$

To prove the grade conjecture for $M$, by Theorem~\ref{thm: vanishing implies grade}, it is enough to show that $\chi(M, R/J)=0$ whenever $M\otimes_R R/J$ has finite length and $\dim(M) + \dim(R/J)<\dim(R)$. But note that 
$$M \otimes_R^\mathbb{L}R/J = L \otimes_S^\mathbb{L}R/J$$
and $\dim(L) + \dim(R/J)  <\dim(S)$ by (\ref{eqn: dimension equality}). Thus by the vanishing of Serre intersection multiplicity over the regular local ring $S$ \cite{RobertsVanshing,GilletSouleIntersectionAdamsOperations}, we know that $\chi(M \otimes_R^\mathbb{L}R/J) = \chi(L \otimes_S^\mathbb{L}R/J)=0$ as wanted.
\end{proof}

Lastly, note that in the counterexamples to the dimension inequality (resp., the grade conjecture), the ring $R$ has dimension three (resp., four). These are indeed the smallest possible dimensions of $R$ in  potential counterexamples, as the following proposition shows. 

\begin{proposition}
\label{prop: small dimensions}
Let $R$ be a Noetherian local ring of dimension $d$ and let $M$ be a finitely generated $R$-module of finite projective dimension. Suppose $N$ is a finitely generated $R$-module so that $M\otimes_RN$ has finite length. Then 
\begin{enumerate}
    \item If $d\leq 2$, then $\dim(M) + \dim(N) \leq d$.
    \item If $d\leq 3$, the $\grade(M) = d - \dim(M)$.
\end{enumerate}
\end{proposition}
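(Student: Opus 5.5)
The plan is to reduce both statements to the new intersection theorem, Auslander--Buchsbaum, and one standard fact. That fact is that if $P$ is minimal in $\Supp(M)$, then $M_P$ is a nonzero module of finite length and finite projective dimension over $R_P$, so $R_P$ is Cohen--Macaulay and $\pd(M_P)=\height(P)$. We may complete $R$ at the outset, since completion preserves all invariants in question. The tools used throughout are:
\begin{itemize}
\item the Peskine--Szpiro intersection theorem, $\dim(N)\le \pd(M)\le \depth(R)$;
\item Beder's equality $\grade(M)=\codim(M)$;
\item Theorem~\ref{thm: vanishing implies grade};
\item the fact that the grade conjecture holds for equidimensional catenary $R$ (\cite[Proposition 6.3.3]{RobertsBook}).
\end{itemize}

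For (1), suppose $d\le 2$ and $\dim(M)+\dim(N)>d$. We may assume $\dim(M)\ge 1$ and $\dim(N)\ge 1$, and replace $N$ by $R/\mathfrak p$ for a prime $\mathfrak p\in\Supp(N)$ of maximal dimension. Then $\dim(N)\le\pd(M)\le\depth(R)$.
\begin{itemize}
\item If $\dim(N)=2=d$, then $R$ is Cohen--Macaulay, hence equidimensional and catenary. Thus $\dim(R/P)+\height(P)=2$ for all $P$, and $R/\mathfrak p$ is a $2$-dimensional component meeting $V(\Ann M)$ only at $\m$. Since $\dim(M)\ge 1$, pick $P\in\Supp(M)$ of height $\le 1$. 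A height-one $P$ meets every $2$-dimensional component in $\m$ or contains its minimal prime. Using $\Tor$-rigidity over the $1$-dimensional Cohen--Macaulay ring $R_P$, together with the fact that $M_P$ has positive rank on the components through $P$ when $\height P=0$, I would show that $P\supseteq\mathfrak p$. This contradicts $\Supp M\cap\Supp N=\{\m\}$.
\item The remaining cases are $\dim(N)=1$ with $\dim(M)=2$, or $d=1$. Here I would argue symmetrically, using that $\dim(M)=d$ forces some minimal prime $\q$ of $R$ with $\dim R/\q=d$ into $\Supp(M)$. At that prime $M_\q$ is free of positive rank, so $\Supp(M)\supseteq V(\q)$. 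Then every $1$-dimensional prime of $\Supp(N)$ lying over $\q$ gives a contradiction. If instead $\mathfrak p\not\supseteq \q$, the $1$-dimensional $R/\mathfrak p$ and the $2$-dimensional $R/\q$ lie in distinct components of a $2$-dimensional ring. I expect to handle that situation by the connectedness/depth argument (depth $R\ge 1$ forces $\m$ not to be the only intersection point in a way compatible with finite $\pd$).
\end{itemize}

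For (2), by Beder it suffices to show $\codim(M)=d-\dim(M)$. The only possible failure is a minimal prime $Q$ of $\Supp(M)$ with $\height(Q)<d-\dim(M)\le d-\dim(R/Q)$. With $d\le 3$ this forces $\dim(M)\ge 1$ and $\height(Q)\le 1$. I would then verify the hypothesis of Theorem~\ref{thm: vanishing implies grade}: $\chi(M,R/J)=0$ whenever $M\otimes R/J$ has finite length and $\dim(M)+\dim(R/J)<d\le 3$, which forces $\dim(R/J)\le 1$.
\begin{itemize}
\item If $\dim(R/J)=0$, filter $R/J$ by copies of $k$ to get $\chi(M,R/J)=\ell(R/J)\sum(-1)^i\beta_i(M)$. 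This vanishes provided $M$ has rank $0$ at every minimal prime of $R$, which is what $\dim(M)<d$ should give on top-dimensional components.
\item If $\dim(R/J)=1$, then $\dim(M)=1$. I would reduce to a $1$-dimensional $R/\mathfrak p$ and compute $\chi$ by cutting down with a parameter $x$ on $R/\mathfrak p$. This gives $\chi(M,R/\mathfrak p)=\chi(M,R/(\mathfrak p,x))-\chi(M,\Tor\text{-correction})$, which reduces to the previous case.
\end{itemize}

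The main obstacle is the rank-zero claim in the $\dim(R/J)=0$ case. The condition $\dim(M)<d$ does not by itself prevent $M$ from having positive rank at a low-dimensional minimal prime $\q$ of $R$, and this is precisely the non-equidimensional phenomenon exploited in Section~\ref{sec: counterexample grade conjecture}. So I would first try to show directly, using $d\le 3$ and the Cohen--Macaulayness of $R_Q$, that such a $\q$ would have to satisfy $\height(Q)=\dim(R)-\dim(R/Q)$. That would bypass the vanishing hypothesis entirely and give $\codim(M)=d-\dim(M)$ by a dimension count on the at most two strata of components.
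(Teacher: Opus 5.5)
Your proposal has genuine gaps. They come from two standard facts about modules of finite projective dimension that it never uses.

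First, such an $M$ has a well-defined rank. For every $\q\in\mathrm{Ass}(R)$ the module $M_\q$ is free over the depth-zero ring $R_\q$ (by Auslander--Buchsbaum), of rank $\sum_i(-1)^i\beta_i(M)$, and this number does not depend on $\q$. Hence $\Ann_R(M)$ is either $0$ or contains a nonzerodivisor \cite[Corollary 20.13]{EisenbudCABook}.

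Second, if $\grade(M)=1$, then $V(z)\subseteq\Supp(M)$ for some nonzerodivisor $z$; this is the MacRae invariant, \cite[Lemma (0.1)]{FoxbyMacRaeInvariant}.

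The paper's proof is just a case check on $\dim(M)$ using these two facts:
\begin{itemize}
\item If $\dim(M)=d$, the rank is positive, so $\Supp(M)=\Spec(R)$ and $N$ has finite length.
\item If $0<\dim(M)<d$, then $\Ann_R(M)\neq 0$, so $\grade(M)\geq 1$.
\item If $\grade(M)=1$, then $V(z)\subseteq\Supp(M)$. In (1) this makes $N/zN$ of finite length, so $\dim(N)\leq 1$. In (2) it contradicts $\dim(M)=1$, because $\dim(R/zR)=2$.
\end{itemize}
In particular, your closing claim that $\dim(M)<d$ does not rule out positive rank at a low-dimensional minimal prime is false, by the first fact. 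That is also not the non-equidimensionality exploited in Section~\ref{sec: counterexample grade conjecture}. There $M'$ has rank $0$; what fails is that a minimal prime of $\Supp(M')$ lies only on the small component $A$, where it has height $2$. Your $\dim(R/J)=0$ computation is correct once you know the rank is $0$.

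Without these facts, the steps you sketch do not close.

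In (1), you need \emph{some} height-one prime of $\Supp(M)$ to contain $\mathfrak p$. No argument at a fixed $P$ can produce one: if $P\not\supseteq\mathfrak p$ then $(R/\mathfrak p)_P=0$, so nothing over $R_P$ (rigidity or otherwise) sees $\mathfrak p$. Such $P$ do occur. In $R=k[[x,y,z]]/(xy)$ with $M=R/(x+z)$ and $\mathfrak p=(x)$, the minimal prime $(y,x+z)$ of $\Supp(M)$ does not contain $\mathfrak p$, while $(x,z)$ does. The right prime has to come from the global divisor $V(z)$ above. Your second bullet explicitly leaves the case $\mathfrak p\not\supseteq\q$ open, although it is immediate from constancy of rank.

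In (2), routing through Theorem~\ref{thm: vanishing implies grade} requires $\chi(M,R/J)=0$ when $\dim(R/J)=\dim(M)=1$ and $d=3$. Your cutting-down step proves nothing here. Additivity on $0\to R/\mathfrak p\xrightarrow{x}R/\mathfrak p\to R/(\mathfrak p,x)\to 0$ only gives $\chi(M,R/(\mathfrak p,x))=0$, and carries no information about $\chi(M,R/\mathfrak p)$. Vanishing statements of this type are exactly where Dutta--Hochster--McLaughlin phenomena live, so this is not a formality. The paper does not use Theorem~\ref{thm: vanishing implies grade} here at all.

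Likewise, your fallback ``dimension count on strata'' is not an argument. Excluding a height-one minimal prime of $\Supp(M)$ when $d=3$ and $\dim(M)=1$ is precisely the content of the MacRae-invariant fact.
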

\begin{proof}
We first prove $(1)$ and we will assume $d=2$ (the case $d\leq 1$ is much easier and we leave it to the readers). If $\dim(M)=0$ then the conclusion is obvious. If $\dim(M)=1$, then $\Ann_R(M)\neq 0$ and thus contains a nonzerodivisor, see \cite[Corollary 20.13]{EisenbudCABook}. It follows that $$1\leq \grade(M)\leq \dim(R)-\dim(M)=1$$ and thus $\grade(M)=1$. Now by \cite[Lemma (0.1) (d) and (e)]{FoxbyMacRaeInvariant}, we know that $V(z)\subseteq \Supp(M)$ for some nonzerodivisor $z\in R$. This implies that 
$$\Supp(N) \cap V(z)\subseteq \Supp(N) \cap \Supp(M)=\{\m\}$$ and thus $N/zN$ has finite length. Hence $\dim(N)\leq 1 = d-\dim(M)$. Finally, if $\dim(M)=2$, then $\Ann_R(M)=0$ (as otherwise it contains a nonzerodivisor by \cite[Corollary 20.13]{EisenbudCABook}). This means $\Supp(M)=\Spec(R)$ and thus $N$ must have finite length by assumption, so $\dim(M)+\dim(N)=d$ in this case.

We next prove $(2)$ and we will assume $d=3$ (the case $d\leq 2$ is easier and we leave it to the readers). If $\dim(M)=0$, then $M$ has finite length and finite projective dimension, thus $R$ is Cohen--Macaulay and $\grade(M)=3=d-\dim(M)$. If $\dim(M)=1$, then $\grade(M)\leq d-\dim(M)=2$. Note that $\grade(M)\neq 0$, for otherwise $\Ann_R(M)=0$ by \cite[Corollary 20.13]{EisenbudCABook} which contradicts $\dim(M)=1$. If $\grade(M)=1$, then by \cite[Lemma (0.1) (d) and (e)]{FoxbyMacRaeInvariant} we know that $V(z)\subseteq \Supp(M)$ for some nonzerodivisor $z\in R$ and thus $\dim(M)\geq \dim(V(z))=2$ which is a contradiction. Hence $\grade(M)=2=d-\dim(M)$ in this case. If $\dim(M)=2$, then $\grade(M)\leq d-\dim(M)=1$ and we know $\grade(M)\neq 0$ (again by \cite[Corollary 20.13]{EisenbudCABook}) so we get equality. Finally, the case $\dim(M)=3$ is obvious as this forces $\grade(M)=0$. 
\end{proof}

\newpage
\bibliographystyle{alpha}
\bibliography{Bib}

@article {DuttaHochsterMcLaughlin,
    AUTHOR = {Dutta, Sankar P. and Hochster, M. and McLaughlin, J. E.},
     TITLE = {Modules of finite projective dimension with negative
              intersection multiplicities},
   JOURNAL = {Invent. Math.},
  FJOURNAL = {Inventiones Mathematicae},
    VOLUME = {79},
      YEAR = {1985},
    NUMBER = {2},
     PAGES = {253--291},
      ISSN = {0020-9910,1432-1297},
   MRCLASS = {13H15 (13C15 14C17)},
  MRNUMBER = {778127},
MRREVIEWER = {Luchezar\ L.\ Avramov},
       DOI = {10.1007/BF01388973},
       URL = {https://doi.org/10.1007/BF01388973},
}

@article {KCSotoLevinsLifting,
    AUTHOR = {KC, Nawaj and Soto Levins, Andrew J.},
     TITLE = {On liftings of modules of finite projective dimension},
   JOURNAL = {Int. Math. Res. Not. IMRN},
  FJOURNAL = {International Mathematics Research Notices. IMRN},
      YEAR = {2024},
    NUMBER = {24},
     PAGES = {14729--14736},
      ISSN = {1073-7928,1687-0247},
   MRCLASS = {13C15 (13H05)},
  MRNUMBER = {4843654},
MRREVIEWER = {Mahdi\ Majidi-Zolbanin},
       DOI = {10.1093/imrn/rnae257},
       URL = {https://doi.org/10.1093/imrn/rnae257},
}

@article {AvramovFoxbyStructureLocalHomomorphisms,
    AUTHOR = {Avramov, Luchezar L. and Foxby, Hans-Bj{\o}rn and Herzog,
              Bernd},
     TITLE = {Structure of local homomorphisms},
   JOURNAL = {J. Algebra},
  FJOURNAL = {Journal of Algebra},
    VOLUME = {164},
      YEAR = {1994},
    NUMBER = {1},
     PAGES = {124--145},
      ISSN = {0021-8693,1090-266X},
   MRCLASS = {13H99},
  MRNUMBER = {1268330},
MRREVIEWER = {L\^{e}\ Tu\^{a}n\ Hoa},
       DOI = {10.1006/jabr.1994.1057},
       URL = {https://doi.org/10.1006/jabr.1994.1057},
}

@article {PeskineSzpiroSyzygiesMultiplicities,
    AUTHOR = {Peskine, Christian and Szpiro, Lucien},
     TITLE = {Syzygies et multiplicit\'{e}s},
   JOURNAL = {C. R. Acad. Sci. Paris S\'{e}r. A},
  FJOURNAL = {Comptes Rendus Hebdomadaires des S\'{e}ances de l'Acad\'{e}mie
              des Sciences. S\'{e}rie A. Sciences Math\'{e}matiques},
    VOLUME = {278},
      YEAR = {1974},
     PAGES = {1421--1424},
      ISSN = {0302-8429},
   MRCLASS = {13D05 (13H15)},
  MRNUMBER = {349659},
MRREVIEWER = {Y.\ Nakai},
}

@article {BederTheGradeConjecture,
    AUTHOR = {Beder, Jesse},
     TITLE = {The grade conjecture and asymptotic intersection multiplicity},
   JOURNAL = {Proc. Amer. Math. Soc.},
  FJOURNAL = {Proceedings of the American Mathematical Society},
    VOLUME = {142},
      YEAR = {2014},
    NUMBER = {12},
     PAGES = {4065--4077},
      ISSN = {0002-9939,1088-6826},
   MRCLASS = {13A35 (13D22 13H15)},
  MRNUMBER = {3266978},
MRREVIEWER = {Linquan\ Ma},
       DOI = {10.1090/S0002-9939-2014-12183-6},
       URL = {https://doi.org/10.1090/S0002-9939-2014-12183-6},
}

@book {EisenbudCABook,
    AUTHOR = {Eisenbud, David},
     TITLE = {Commutative algebra},
    SERIES = {Graduate Texts in Mathematics},
    VOLUME = {150},
      NOTE = {With a view toward algebraic geometry},
 PUBLISHER = {Springer-Verlag, New York},
      YEAR = {1995},
     PAGES = {xvi+785},
      ISBN = {0-387-94268-8; 0-387-94269-6},
   MRCLASS = {13-01 (14A05)},
  MRNUMBER = {1322960},
MRREVIEWER = {Matthew\ Miller},
       DOI = {10.1007/978-1-4612-5350-1},
       URL = {https://doi.org/10.1007/978-1-4612-5350-1},
}

@book {FultonIntersectionTheoryOriginal,
    AUTHOR = {Fulton, William},
     TITLE = {Intersection theory},
    SERIES = {Ergebnisse der Mathematik und ihrer Grenzgebiete (3) [Results
              in Mathematics and Related Areas (3)]},
    VOLUME = {2},
 PUBLISHER = {Springer-Verlag, Berlin},
      YEAR = {1984},
     PAGES = {xi+470},
      ISBN = {3-540-12176-5},
   MRCLASS = {14C17 (14-02 14C40)},
  MRNUMBER = {732620},
MRREVIEWER = {Werner\ Kleinert},
       DOI = {10.1007/978-3-662-02421-8},
       URL = {https://doi.org/10.1007/978-3-662-02421-8},
}

@book {WeibelKbook,
    AUTHOR = {Weibel, Charles A.},
     TITLE = {The {$K$}-book},
    SERIES = {Graduate Studies in Mathematics},
    VOLUME = {145},
      NOTE = {An introduction to algebraic $K$-theory},
 PUBLISHER = {American Mathematical Society, Providence, RI},
      YEAR = {2013},
     PAGES = {xii+618},
      ISBN = {978-0-8218-9132-2},
   MRCLASS = {19-01},
  MRNUMBER = {3076731},
MRREVIEWER = {L.\ N.\ Vaserstein},
       DOI = {10.1090/gsm/145},
       URL = {https://doi.org/10.1090/gsm/145},
}

@article {BenBassatBlockMilnorDescent,
    AUTHOR = {Ben-Bassat, Oren and Block, Jonathan},
     TITLE = {Milnor descent for cohesive dg-categories},
   JOURNAL = {J. K-Theory},
  FJOURNAL = {Journal of K-Theory. K-Theory and its Applications in Algebra,
              Geometry, Analysis \& Topology},
    VOLUME = {12},
      YEAR = {2013},
    NUMBER = {3},
     PAGES = {433--459},
      ISSN = {1865-2433,1865-5394},
   MRCLASS = {18D99 (46L87 58B34)},
  MRNUMBER = {3165183},
MRREVIEWER = {Pawel\ Sosna},
       DOI = {10.1017/is013007003jkt236},
       URL = {https://doi.org/10.1017/is013007003jkt236},
}

@book {RobertsBook,
    AUTHOR = {Roberts, Paul C.},
     TITLE = {Multiplicities and {C}hern classes in local algebra},
    SERIES = {Cambridge Tracts in Mathematics},
    VOLUME = {133},
 PUBLISHER = {Cambridge University Press, Cambridge},
      YEAR = {1998},
     PAGES = {xii+303},
      ISBN = {0-521-47316-0},
   MRCLASS = {13D22 (13H15 14C17)},
  MRNUMBER = {1686450},
MRREVIEWER = {Hans-Bj\o rn\ Foxby},
       DOI = {10.1017/CBO9780511529986},
       URL = {https://doi.org/10.1017/CBO9780511529986},
}

@article {GilletSouleIntersectionAdamsOperations,
    AUTHOR = {Gillet, H. and Soul\'{e}, C.},
     TITLE = {Intersection theory using {A}dams operations},
   JOURNAL = {Invent. Math.},
  FJOURNAL = {Inventiones Mathematicae},
    VOLUME = {90},
      YEAR = {1987},
    NUMBER = {2},
     PAGES = {243--277},
      ISSN = {0020-9910,1432-1297},
   MRCLASS = {14C17 (13D05 14B15 14C40 19E08)},
  MRNUMBER = {910201},
MRREVIEWER = {G.\ Horrocks},
       DOI = {10.1007/BF01388705},
       URL = {https://doi.org/10.1007/BF01388705},
}

@article {RobertsVanshing,
    AUTHOR = {Roberts, Paul},
     TITLE = {The vanishing of intersection multiplicities of perfect
              complexes},
   JOURNAL = {Bull. Amer. Math. Soc. (N.S.)},
  FJOURNAL = {American Mathematical Society. Bulletin. New Series},
    VOLUME = {13},
      YEAR = {1985},
    NUMBER = {2},
     PAGES = {127--130},
      ISSN = {0273-0979,1088-9485},
   MRCLASS = {13H15 (13D30 14C17)},
  MRNUMBER = {799793},
MRREVIEWER = {Marc\ Levine},
       DOI = {10.1090/S0273-0979-1985-15394-7},
       URL = {https://doi.org/10.1090/S0273-0979-1985-15394-7},
}

@article{RobertsSrinivasModulesoffinitelengthandfiniteprojectivedimension,
    AUTHOR = {Roberts, Paul C. and Srinivas, V.},
     TITLE = {Modules of finite length and finite projective dimension},
   JOURNAL = {Invent. Math.},
  FJOURNAL = {Inventiones Mathematicae},
    VOLUME = {151},
      YEAR = {2003},
    NUMBER = {1},
     PAGES = {1--27},
      ISSN = {0020-9910,1432-1297},
   MRCLASS = {13D22 (14C17)},
  MRNUMBER = {1943740},
MRREVIEWER = {Keri\ Sather-Wagstaff},
       DOI = {10.1007/s002220200217},
       URL = {https://doi.org/10.1007/s002220200217},
}

@incollection {FoxbyMacRaeInvariant,
    AUTHOR = {Foxby, Hans-Bj{\o}rn},
     TITLE = {The {M}ac{R}ae invariant},
 BOOKTITLE = {Commutative algebra: {D}urham 1981 ({D}urham, 1981)},
    SERIES = {London Math. Soc. Lecture Note Ser.},
    VOLUME = {72},
     PAGES = {121--128},
 PUBLISHER = {Cambridge Univ. Press, Cambridge},
      YEAR = {1982},
      ISBN = {0-521-27125-8},
   MRCLASS = {13H15 (13D15 13D25)},
  MRNUMBER = {693631},
}

@article {ChanHuang,
    AUTHOR = {Chan, C.-Y. Jean and Huang, I-Chiau},
     TITLE = {Module structure of an injective resolution},
   JOURNAL = {Comm. Algebra},
  FJOURNAL = {Communications in Algebra},
    VOLUME = {35},
      YEAR = {2007},
    NUMBER = {11},
     PAGES = {3713--3750},
      ISSN = {0092-7872,1532-4125},
   MRCLASS = {13D02 (13D07 13D45)},
  MRNUMBER = {2362680},
MRREVIEWER = {Reza\ Sazeedeh},
       DOI = {10.1080/00927870701404770},
       URL = {https://doi.org/10.1080/00927870701404770},
}

@article {HeitmannCounterexampleRigidityofTor,
    AUTHOR = {Heitmann, Raymond C.},
     TITLE = {A counterexample to the rigidity conjecture for rings},
   JOURNAL = {Bull. Amer. Math. Soc. (N.S.)},
  FJOURNAL = {American Mathematical Society. Bulletin. New Series},
    VOLUME = {29},
      YEAR = {1993},
    NUMBER = {1},
     PAGES = {94--97},
      ISSN = {0273-0979,1088-9485},
   MRCLASS = {13D05 (18G15)},
  MRNUMBER = {1197425},
MRREVIEWER = {Jos\'{e}\ A.\ Hermida-Alonso},
       DOI = {10.1090/S0273-0979-1993-00410-5},
       URL = {https://doi.org/10.1090/S0273-0979-1993-00410-5},
}

@book {SerreLocalAlgebra,
    AUTHOR = {Serre, Jean-Pierre},
     TITLE = {Alg\`ebre locale. {M}ultiplicit\'{e}s},
    SERIES = {Lecture Notes in Mathematics},
    VOLUME = {11},
      NOTE = {Cours au Coll\`ege de France, 1957--1958, r\'{e}dig\'{e} par
              Pierre Gabriel,
              Seconde \'{e}dition, 1965},
 PUBLISHER = {Springer-Verlag, Berlin-New York},
      YEAR = {1965},
     PAGES = {vii+188 pp. (not consecutively paged)},
   MRCLASS = {13.95 (14.08)},
  MRNUMBER = {201468},
MRREVIEWER = {M.\ Nagata},
}

@article {AlbertMuckenhouptMatricesTraceZero,
    AUTHOR = {Albert, A. A. and Muckenhoupt, Benjamin},
     TITLE = {On matrices of trace zeros},
   JOURNAL = {Michigan Math. J.},
  FJOURNAL = {Michigan Mathematical Journal},
    VOLUME = {4},
      YEAR = {1957},
     PAGES = {1--3},
      ISSN = {0026-2285,1945-2365},
   MRCLASS = {15.0X},
  MRNUMBER = {83961},
MRREVIEWER = {D.\ E.\ Rutherford},
       URL = {http://projecteuclid.org/euclid.mmj/1028990168},
}

@article {AndreDSC,
    AUTHOR = {Andr\'{e}, Yves},
     TITLE = {La conjecture du facteur direct},
   JOURNAL = {Publ. Math. Inst. Hautes \'{E}tudes Sci.},
  FJOURNAL = {Publications Math\'{e}matiques. Institut de Hautes \'{E}tudes
              Scientifiques},
    VOLUME = {127},
      YEAR = {2018},
     PAGES = {71--93},
      ISSN = {0073-8301,1618-1913},
   MRCLASS = {13D22 (13A35 13B40 13D09 18A99)},
  MRNUMBER = {3814651},
MRREVIEWER = {Marcel\ Morales},
       DOI = {10.1007/s10240-017-0097-9},
       URL = {https://doi.org/10.1007/s10240-017-0097-9},
}

@book {HochsterTopicsCBMS,
    AUTHOR = {Hochster, Melvin},
     TITLE = {Topics in the homological theory of modules over commutative
              rings},
    SERIES = {Conference Board of the Mathematical Sciences Regional
              Conference Series in Mathematics, No. 24},
 PUBLISHER = {Published for the Conference Board of the Mathematical               Sciences, Washington, DC; by American Mathematical Society,               Providence, RI},
      NOTE = {Expository lectures from the CBMS Regional Conference held at the University of Nebraska, Lincoln, Neb., June 24--28, 1974},
      YEAR = {1975},
     PAGES = {vii+75},
   MRCLASS = {13DXX (14BXX)},
  MRNUMBER = {371879},
MRREVIEWER = {Tadayuki\ Matsuoka},
}

@incollection {HochsterDimensionIntersectionHypersurface,
    AUTHOR = {Hochster, Melvin},
     TITLE = {The dimension of an intersection in an ambient hypersurface},
 BOOKTITLE = {Algebraic geometry ({C}hicago, {I}ll., 1980)},
    SERIES = {Lecture Notes in Math.},
    VOLUME = {862},
     PAGES = {93--106},
 PUBLISHER = {Springer, Berlin},
      YEAR = {1981},
      ISBN = {3-540-10833-5},
   MRCLASS = {13H15 (13D15 14B25)},
  MRNUMBER = {644818},
MRREVIEWER = {L.\ B\u{a}descu},
}

@article {RobertsIntersectionTheorem,
    AUTHOR = {Roberts, Paul},
     TITLE = {Le th\'{e}or\`eme d'intersection},
   JOURNAL = {C. R. Acad. Sci. Paris S\'{e}r. I Math.},
  FJOURNAL = {Comptes Rendus des S\'{e}ances de l'Acad\'{e}mie des Sciences.
              S\'{e}rie I. Math\'{e}matique},
    VOLUME = {304},
      YEAR = {1987},
    NUMBER = {7},
     PAGES = {177--180},
      ISSN = {0249-6291},
   MRCLASS = {14C17 (13D25)},
  MRNUMBER = {880574},
MRREVIEWER = {R\"{u}diger\ Achilles},
}

@incollection {ThomasonTrobaughKtheory,
    AUTHOR = {Thomason, R. W. and Trobaugh, Thomas},
     TITLE = {Higher algebraic {$K$}-theory of schemes and of derived
              categories},
 BOOKTITLE = {The {G}rothendieck {F}estschrift, {V}ol. {III}},
    SERIES = {Progr. Math.},
    VOLUME = {88},
     PAGES = {247--435},
 PUBLISHER = {Birkh\"{a}user Boston, Boston, MA},
      YEAR = {1990},
      ISBN = {0-8176-3487-8},
   MRCLASS = {19E08 (14C35 19D10)},
  MRNUMBER = {1106918},
MRREVIEWER = {Charles\ Weibel},
       DOI = {10.1007/978-0-8176-4576-2\{_}10}

@article {KuranoNumericalequivalenceonChowgroupsoflocalrings,
    AUTHOR = {Kurano, Kazuhiko},
     TITLE = {Numerical equivalence defined on {C}how groups of {N}oetherian
              local rings},
   JOURNAL = {Invent. Math.},
  FJOURNAL = {Inventiones Mathematicae},
    VOLUME = {157},
      YEAR = {2004},
    NUMBER = {3},
     PAGES = {575--619},
      ISSN = {0020-9910,1432-1297},
   MRCLASS = {14C15 (13D15 13D40)},
  MRNUMBER = {2092770},
MRREVIEWER = {Gerhard\ Pfister},
       DOI = {10.1007/s00222-004-0361-8},
       URL = {https://doi.org/10.1007/s00222-004-0361-8},
}

@article {PeskineSzpiroIHES,
    AUTHOR = {Peskine, C. and Szpiro, L.},
     TITLE = {Dimension projective finie et cohomologie locale.
              {A}pplications \`a la d\'{e}monstration de conjectures de {M}.
              {A}uslander, {H}. {B}ass et {A}. {G}rothendieck},
   JOURNAL = {Inst. Hautes \'{E}tudes Sci. Publ. Math.},
  FJOURNAL = {Institut des Hautes \'{E}tudes Scientifiques. Publications
              Math\'{e}matiques},
    NUMBER = {42},
      YEAR = {1973},
     PAGES = {47--119},
      ISSN = {0073-8301,1618-1913},
   MRCLASS = {14B15 (13D05 13H10)},
  MRNUMBER = {374130},
MRREVIEWER = {Melvin\ Hochster},
       URL = {http://www.numdam.org/item?id=PMIHES_1973__42__47_0},
}
\end{document}